\documentclass[11pt]{article}

\usepackage[margin=1.1in]{geometry}
\usepackage{amsmath,amssymb,amsthm,mathtools}
\usepackage{enumitem}
\usepackage{hyperref}
\usepackage{xcolor}

\hypersetup{
  colorlinks=true,
  linkcolor=blue!60!black,
  citecolor=blue!60!black,
  urlcolor=blue!60!black
}

\newtheorem{theorem}{Theorem}[section]
\newtheorem{lemma}[theorem]{Lemma}
\newtheorem{proposition}[theorem]{Proposition}
\newtheorem{corollary}[theorem]{Corollary}
\theoremstyle{definition}
\newtheorem{definition}[theorem]{Definition}
\theoremstyle{remark}

\newcommand{\rlt}{\rho_{<}}
\newcommand{\exlt}{\operatorname{ex}_{<}}
\newcommand{\chilt}{\chi_{<}}
\newcommand{\pilt}{\vec\pi}
\newcommand{\Pvec}{\vec P}
\newcommand{\eps}{\varepsilon}

\title{Binary smoothing and relative Tur\'an densities of ordered triangle-tails}
\author{Shuyan Chen\\
Department of Mathematics, The University of Manchester\\
\texttt{shuyan.chen-2@student.manchester.ac.uk}}
\date{}

\begin{document}
\maketitle

\begin{abstract}
For every $b\ge1$, let $Q_{2,b}$ be the ordered graph obtained from a transitive ordered triangle by attaching a monotone tail of length $b$ at its rightmost vertex.  We prove
\[
\rho_{<}(Q_{2,b})=\frac12\qquad (b\ge1).
\]
Thus the previously isolated case $Q_{2,2}$ is one member of an exact infinite triangle-tail family.  The lower bound is the sharp forward-template identity $\lambda(Q_{2,b})=1/2$, and it is realized already inside binary-level hosts: a parity-cut construction gives $Q_{2,b}$-free binary-level graphs with density exactly $1/2$ on every level.  The upper bound uses the binary rich-level reduction.  Its key input is the intrinsic decomposition of a $Q_{2,b}$-free graph into tail-starting vertices $T_b$ and the complement $R$: there are no forward edges from $R$ to $T_b$, $G[T_b]$ is ordered-triangle-free, and $G[R]$ is monotone-$\vec P_{b+1}$-free.  We control these pieces by weighted binary $\vec P_{b+1}$ smoothing and weighted binary Mantel smoothing, the latter following from a binary ultrametric cut-domination theorem.  We also record exact path-blow-up values, giving a reusable template/rich-host calculus for ordered relative densities.
\end{abstract}

\section{Introduction}

For an ordered graph $F$ and an ordered host graph $G$, let
\[
\exlt(G,F)=\max\{e(H):H\subseteq G,\ H\text{ contains no ordered copy of }F\}.
\]
The relative Tur\'an density of $F$ is
\[
\rlt(F)=\inf_G \frac{\exlt(G,F)}{e(G)},
\]
where the infimum is over all non-empty ordered host graphs.  Equivalently, $\rlt(F)$ is the largest edge proportion that can always be retained while avoiding $F$.

Classical Tur\'an theory begins with Mantel's triangle theorem and Tur\'an's clique theorem, while the Erd\H{o}s--Stone--Simonovits theorem determines the asymptotic density for every non-bipartite forbidden graph~\cite{Mantel07,Turan41,ES46,ES66}.  For ordered graphs, Pach and Tardos proved the corresponding complete-host formula in terms of interval chromatic number~\cite{PT06}.  A substantial parallel literature studies ordered pattern avoidance, including the matrix viewpoint of Marcus and Tardos~\cite{MT04}, ordered forests~\cite{KTTW19}, and bipartite ordered graphs~\cite{MT22}; see also the surveys of Tardos~\cite{TardosICM,Tardos19}.

The relative ordered Tur\'an density was introduced by Reiher, R\"odl, Sales and Schacht, who established its basic theory, determined monotone paths and cliques, and proved blow-up invariance~\cite{RRSS25}.  King, Lidick\'y, Ouyang, Pfender, Wang and Xiang produced the first intermediate-density examples and introduced the chorded paths $Q_{a,b}$ studied here~\cite{KLOPWX25}.  Illingworth, Ranganathan, Versteegen and Williams subsequently proved that binary-level hosts are universal and characterized the zero-density case~\cite{IRVW25}.  The sparse-host constructions are tied to shift graphs, a classical source of almost bipartite graphs with large chromatic number~\cite{EHS82}; the relevant independent-set estimates were developed by Arman, R\"odl and Sales~\cite{ARS22}.

For $b\ge1$, let $Q_{2,b}$ be the ordered graph on
\[
1<2<\cdots<b+3
\]
with edge set
\[
12,\quad 13,\quad 23,\quad 34,\quad45,\quad\ldots,\quad(b+2)(b+3).
\]
Thus $Q_{2,b}$ is a transitive ordered triangle whose rightmost vertex starts a monotone tail of length $b$.  Since every consecutive pair is an edge, its interval chromatic number is $b+3$, and therefore
\[
\vec\pi(Q_{2,b})=1-\frac1{b+2}
\]
by Pach--Tardos.  King et al.~proved general bounds for the family $Q_{a,b}$ and posed the problem of determining its relative densities exactly~\cite{KLOPWX25}.  The relative density is much smaller.

\begin{theorem}[Main theorem]\label{thm:main}
For every $b\ge1$,
\[
\rlt(Q_{2,b})=\frac12.
\]
\end{theorem}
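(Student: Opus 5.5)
We prove the two inequalities separately, following the outline of the abstract.

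\emph{Lower bound} $\rlt(Q_{2,b})\ge\frac12$. The goal is to show that every ordered host $G$ contains a $Q_{2,b}$-free subgraph with at least $e(G)/2$ edges. For this we would use a forward template, a fixed two-part rule that decides whether an edge is kept according to a labelling of its left endpoint. The natural candidate is a parity or level cut. Assign to each vertex $v$ the length $\ell(v)$ of the longest monotone path of $G$ ending at $v$, and keep an edge $uv$ with $u<v$ according to the parity of $\ell(u)$. Averaging over the two choices of parity class gives half of the edges. Freeness must be checked along the triangle $1,2,3$ together with the forced tail, and this is where I expect the construction to need care. If the parity rule alone fails, I would retain whichever of the two complementary edge classes is larger, since each is $Q_{2,b}$-free. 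Along kept edges the labels then behave so that a transitive triangle at its right end cannot launch a tail of length $b$. Blow-up invariance from~\cite{RRSS25} and universality of binary-level hosts~\cite{IRVW25} let us verify the identity $\lambda(Q_{2,b})=1/2$ only on such hosts. The parity-cut construction on binary-level hosts also shows the bound $1/2$ is attained, which is consistent with the upper bound.

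\emph{Upper bound} $\rlt(Q_{2,b})\le\frac12$. By~\cite{IRVW25} it suffices to take binary-level hosts $G$ with many rich levels and show that every $Q_{2,b}$-free $H\subseteq G$ has at most $(\frac12+o(1))e(G)$ edges.

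\begin{enumerate}[label=(\roman*)]
\item Decompose $V(H)=T_b\cup R$, where $T_b$ is the set of vertices starting a monotone tail of length $b$ in $H$. Any forward edge from $R$ into $T_b$ would extend such a tail, so there are none. A transitive triangle inside $T_b$ would complete a copy of $Q_{2,b}$, so $H[T_b]$ is ordered-triangle-free. Finally $H[R]$ has no monotone $\Pvec_{b+1}$.
\item Bound the edges of $H$ by a weighted count: edges inside $T_b$, edges inside $R$, and backward cross edges from $T_b$ to $R$. Write $x$ for the weight fraction of $T_b$ on each level.
\item Apply a weighted binary Mantel smoothing bound to $H[T_b]$, obtained from a binary ultrametric cut-domination theorem. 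This says that in a binary-level host the triangle-free proportion within a vertex set of weight $x$ is at most half of the host edges there.
\item Apply a weighted binary $\Pvec_{b+1}$ smoothing bound to $H[R]$. Since monotone paths have small relative density on rich binary hosts~\cite{RRSS25}, this contributes only an $O(1/b)$-type, or vanishing, fraction after smoothing.
\item Show that the cross edges, which are one-directional, can only fill the complementary part. Optimizing over $x$ then gives at most $\frac12$.
\end{enumerate}

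The main obstacle is step (iii), the binary Mantel smoothing. The plain Mantel bound is not enough here, because the host is sparse. One needs that for any vertex weighting on the binary ultrametric, a triangle-free subgraph is dominated by a cut. I would prove this by induction on the depth of the binary tree. The idea is to compare each triangle-free edge set with the best cut that splits each subtree into left and right halves, and then apply a local convexity argument to merge the weighted cross-level contributions.
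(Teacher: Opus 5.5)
\textbf{There is a genuine gap.} Your upper-bound architecture (the $T_b/R$ split, Mantel-type control of $H[T_b]$, path control of $H[R]$) is the right one. But step (v), which carries the real content, fails as you describe it.

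\textbf{Step (v).} No per-level optimization over the $T$-fraction $x$ can give $\frac12$. Let $a,c$ be the densities of $R$ in the left and right children of a level-$\ell$ cell. The pairs you may keep there are all $TR$ pairs, no $RT$ pairs, half of $TT$ and a $\beta_b=\frac{b-1}{2b}$ share of $RR$. Their normalized size is $\frac12+\frac12(c-a)-\frac1{2b}ac$, which equals $1$ when $a=0$, $c=1$, since then every pair is a $TR$ pair. This is realized. For $b=1$ the graph $E_\ell(B_d)$ of all level-$\ell$ pairs is bipartite, hence $Q_{2,1}$-free, and has $d_\ell=1$. All of its edges go from $T$ (the left children) to $R$ (the right children). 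So a single level can be full, and the bound must come from a cross-level argument, which you do not have. The missing idea is to smooth the role asymmetry across levels. Write $D_\ell=\mathbb E_J(c_J-a_J)=2\,\mathbb E\,1_R(X)(2X_\ell-1)$ for a uniform leaf $X$. By Cauchy--Schwarz, $\sum_{\ell\in L}D_\ell\le\sqrt{|L|}$ for any set $L$ of levels. Hence the role capacity reaches $\frac12+\eps$ on at most $1/(4\eps^2)$ levels.

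\textbf{Steps (iii) and (iv).} These have the same defect. The level-$\ell$ pairs form a bipartite, $\Pvec_3$-free graph. So a triangle-free graph on $U$, and for $b\ge2$ a $\Pvec_{b+1}$-free one, can contain every level-$\ell$ pair inside $U$. Only statements with boundedly many exceptional levels are true.
\begin{itemize}
\item For (iii), cut domination alone does not yield the half-baseline, because a cut can also take every pair of one level. You must apply it with LCA weights $1/\tau_{\ell,d}$ supported on the set $S$ of bad levels, so that a single cut dominates $H[T_b]$ on all of $S$ at once. You then bound that cut's total excess over $\frac12UU_\ell$ by the martingale energy $\sum_\ell\mathbb E\Delta_\ell^2\le1$.
\item For (iv), monotone paths do not have small relative density: $\rlt(\Pvec_{b+1})=\beta_b\to\frac12$. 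What you need is $e_\ell(H[R])\le\beta_bRR_\ell+\eta\tau_{\ell,d}$ outside $O_{b,\eta}(1)$ levels, for instance via rank labelling together with template and density martingales. Only $\beta_b\le\frac12$ is used at the end.
\end{itemize}

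\textbf{Lower bound.} Your argument here is wrong as written. Keeping $uv$ according to the parity of $\ell(u)$ alone behaves like a template containing the loop $(0,0)$, and neither class need be $Q_{2,b}$-free. Take $v_1<w<v_2<w'<v_3<v_4$ with edges $v_1v_2,v_1v_3,v_2v_3,v_3v_4,v_1w,wv_2,v_2w',w'v_3$. Your labels on $v_1,v_2,v_3$ are $0,2,4$. The even class is the larger one, with $6$ of the $8$ edges, and it contains $Q_{2,1}$ on $v_1<v_2<v_3<v_4$.

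The correct argument is elementary. Colour the vertices uniformly at random with two colours and keep the bichromatic edges. In expectation this keeps half the edges, and the result is bipartite, hence triangle-free, hence $Q_{2,b}$-free. This is the paper's two-label template. Neither the binary-host reduction nor the inequality $\lambda(Q_{2,b})\le\frac12$ is needed for this direction.
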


For $b=2$, this closes the previously known gap from~\cite{KLOPWX25}
\[
\frac12\le \rlt(Q_{2,2})\le \frac23<\vec\pi(Q_{2,2})=\frac34.
\]
The theorem does not merely settle the first non-trivial chorded path.  It identifies an infinite ordered triangle-tail family whose relative density collapses to the bipartite barrier $1/2$, independent of the tail length.

The constant $1/2$ is sharp in the same binary rich-level language used for the upper bound.  For every depth $d\ge2$, a parity cut of the binary interval $B_d$, with half of the deepest sibling edges retained, is bipartite and hence $Q_{2,b}$-free, while every binary level has density exactly $1/2$.  Therefore no forcing statement of the Illingworth--Ranganathan--Versteegen--Williams type can have threshold below $1/2$.

The proof has two independent sides.  The lower bound is a forward-template obstruction: we define a parameter $\lambda(F)$, prove $\rlt(F)\ge\lambda(F)$ for every ordered graph $F$, and show
\[
\lambda(Q_{2,b})=\frac12\qquad(b\ge1).
\]
This gives the right lower bound for the whole family and explains why the answer is the bipartite obstruction, not the complete-host density.

The upper bound uses the binary rich-level reduction.  For every fixed $b\ge1$ and every $\eps>0$, we prove that there is $C=C(b,\eps)$ such that every binary-level graph with at least $C$ levels of density at least $1/2+\eps$ contains $Q_{2,b}$.  The structural split is forced by the graph itself.  Let $T_b(G)$ be the set of vertices that start a monotone tail of length $b$, and let $R=V(G)\setminus T_b(G)$.  In a $Q_{2,b}$-free graph there are no forward edges from $R$ to $T_b(G)$, the graph induced by $T_b(G)$ is ordered-triangle-free, and the graph induced by $R$ is monotone-$\Pvec_{b+1}$-free.  The $R$-part is controlled by a $b$-rank smoothing lemma; the $T$-part is controlled by a weighted Mantel theorem on binary levels.

The main technical input for the Mantel step is a binary-ultrametric weighted analogue of the classical Mantel--Tur\'an extremal theorem~\cite{Mantel07,Turan41}.  It is proved by a mass-sensitive Zykov-type cherry induction, in the spirit of classical symmetrization~\cite{Zykov49}.

\begin{theorem}[Binary ultrametric cut domination, informal]
Let $\mathcal U$ be a finite antichain of binary-tree nodes with positive masses, and weight each pair $XY$ by a non-negative function of the binary least common ancestor of $X$ and $Y$, multiplied by the product of the two masses.  Then every triangle-free graph on $\mathcal U$ has weight at most the maximum cut weight of this weighted complete graph.
\end{theorem}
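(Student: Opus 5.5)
We prove the statement by a Zykov-type symmetrization (cherry) induction on $|\mathcal U|$, where the masses are allowed to merge. Write $w(XY)=f(X\wedge Y)\,m(X)m(Y)$ with $f\ge0$ a function of the least common ancestor $X\wedge Y$. For a triangle-free graph $H$ on $\mathcal U$, put $w(H)=\sum_{XY\in E(H)}w(XY)$. The goal is
\[
w(H)\le \max_{S\subseteq\mathcal U} w(S,\mathcal U\setminus S).
\]

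\textbf{Reduction to a cherry.} Since $\mathcal U$ is a finite antichain, there are two nodes $X,Y\in\mathcal U$ that are \emph{siblings in the induced ultrametric}. By this we mean that no third element of $\mathcal U$ lies below $X\wedge Y$. Such a pair is found by taking a deepest node $v=X\wedge Y$ over pairs in $\mathcal U$. The key property of such a cherry is that for every $Z\ne X,Y$ we have $X\wedge Z=Y\wedge Z$. Hence $w(XZ)/m(X)=w(YZ)/m(Y)=f(X\wedge Z)m(Z)$. Define the normalized degrees
\[
d_H(X)=\frac{1}{m(X)}\sum_{Z\sim_H X,\,Z\ne Y} w(XZ)
\]
and $d_H(Y)$ in the same way.

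\textbf{Symmetrization step.} Suppose $XY\notin E(H)$. We may assume $d_H(X)\ge d_H(Y)$, and we replace the neighbourhood of $Y$ by that of $X$. The new graph is still triangle-free, because $X$ and $Y$ become non-adjacent twins. It has weight at least $w(H)$, since the edges at $Y$ now contribute $m(Y)d_H(X)\ge m(Y)d_H(Y)$. After this step $X$ and $Y$ are twins, so we merge them into a single node $X'$ of mass $m(X)+m(Y)$. This node sits at $X$'s place, or equivalently at $v$, with the other elements still forming an antichain. Cross weights are preserved by the cherry property, and the lost pair $XY$ was not an edge anyway. Induction gives a cut on the merged family of weight at least $w(H)$. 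Splitting $X'$ back into $X$ and $Y$ on the same side gives a cut of $\mathcal U$ with the same weight.

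Now suppose $XY\in E(H)$. Triangle-freeness makes $N(X)\setminus\{Y\}$ and $N(Y)\setminus\{X\}$ disjoint. Here we argue by \emph{mass transfer}. For fixed $m(X)+m(Y)$ and fixed neighbourhoods, $w(H)$ is a quadratic in $t=m(X)$: the part linear in $t$ comes from the cross terms, and the term $f(v)\,t(s-t)$, where $s=m(X)+m(Y)$, is concave in $t$. A concave function is not automatically maximized at an endpoint, so this needs care. The plan is to compare $H$ with the graph $H'$ in which $Y$ copies the neighbourhood of $X$ except for $Y$ itself, and in which $X$ keeps only $Y$, or the reverse. Equivalently, we view $\{X,Y\}$ as the two sides of a local cut. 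We then show $w(H)\le\max(w(H_1),w(H_2))$, where in $H_i$ the pair $\{X,Y\}$ is handled by the non-edge case already treated. The inequality to check is
\[
f(v)m(X)m(Y)+m(X)a+m(Y)b\le \max\bigl(\text{cut-type rearrangements}\bigr),
\]
where $a$ and $b$ are the disjoint neighbourhood sums. I expect this to be handled by observing that $N(X)\cup\{X\}$ and $N(Y)\cup\{Y\}$ themselves already give a bipartite structure locally. I also expect to need the extra observation that in an optimal triangle-free $H$ one may assume $H$ is complete bipartite between $N(X)$ and $N(Y)$, in the spirit of Zykov.

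\textbf{Main obstacle.} The hard step is the adjacent-cherry case, since the standard Zykov argument only symmetrizes non-adjacent vertices. If the direct inequality fails, my fallback is to first apply Zykov symmetrization globally among non-adjacent pairs. This includes non-cherry pairs, where degrees are compared via the weighted degree $\sum_Z f(X\wedge Z)m(Z)$. It reduces $H$ to a complete multipartite graph, which by triangle-freeness is complete bipartite and hence a cut. The difficulty in this fallback is that for non-cherry pairs the weights $w(XZ)$ and $w(YZ)$ are not proportional, so the swap can change the weight of $XZ$ versus $YZ$. The ultrametric cherry structure is exactly what rescues the ordering of the induction: always process a deepest cherry first.
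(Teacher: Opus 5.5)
Your cherry selection (a deepest lowest common ancestor $v=X\wedge Y$) and your treatment of a non-adjacent cherry are correct and coincide with the paper's argument. There you copy the neighbourhood with the larger normalized degree and merge $X,Y$ into one node of mass $m(X)+m(Y)$. The gap is the adjacent case $XY\in E(H)$, which you leave as a plan, and the plan as stated cannot work.

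Your comparison $w(H)\le\max(w(H_1),w(H_2))$, with $\{X,Y\}$ turned into non-adjacent twins, is false. If $XY$ is an isolated edge of $H$ (so $a=b=0$), every such rearrangement loses $f(v)m(X)m(Y)>0$. The assumption that $H$ is complete between $N(X)$ and $N(Y)$ is unjustified, since adding such edges can create triangles through third vertices. The global Zykov fallback fails for the reason you yourself identify. For arbitrary non-negative pair weights the statement is actually false: a $5$-cycle of weight-one edges with weight-zero chords has triangle-free weight $5$ but maximum cut $4$. So no symmetrization that ignores the ultrametric structure can close the adjacent case.

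The missing idea is mass cancellation, rather than modifying the graph on the same family. Let $a=d_H(X)$ and $b=d_H(Y)$ be your normalized outside degrees, let $H_{\mathrm{out}}=H-\{X,Y\}$, and assume $m(X)\ge m(Y)$.
\begin{enumerate}
\item Delete $X,Y$ and insert $v$ with residual mass $m(X)-m(Y)$, omitted if zero, joined to $N_H(X)\setminus\{Y\}$. This graph is triangle-free because that neighbourhood is independent, and its weight is $w(H_{\mathrm{out}})+(m(X)-m(Y))a$.
\item Take the cut given by induction, put $X$ on the side of $v$, and put $Y$ on the other side.
\item The edge $XY$ is now cut. Every outside $Z$ is separated from exactly one of $X,Y$, so it contributes at least $m(Y)m(Z)f(X\wedge Z)$. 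It contributes the extra $(m(X)-m(Y))m(Z)f(X\wedge Z)$ exactly when the small cut separates $v$ from $Z$.
\end{enumerate}
Writing $c=\sum_{Z\ne X,Y}f(X\wedge Z)m(Z)$, the expanded cut therefore has weight at least
\[
w(H_{\mathrm{out}})+(m(X)-m(Y))a+f(v)m(X)m(Y)+m(Y)c .
\]
The two outside neighbourhoods are disjoint, so $c\ge a+b$. Hence this is at least
\[
w(H_{\mathrm{out}})+f(v)m(X)m(Y)+m(X)a+m(Y)b=w(H).
\]
The point is that the mass-$m(Y)$ part of $X$, paired with $Y$, is cut against every outside node whatever side that node is on. This absorbs the concave term $f(v)\,t(s-t)$ that was blocking you.
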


This theorem implies weighted binary Mantel smoothing: an ordered-triangle-free graph on a subset of a binary interval cannot beat the level-by-level half-baseline on more than boundedly many levels.  Together with weighted binary $\Pvec_{b+1}$ smoothing and the intrinsic role split, this proves Theorem~\ref{thm:main}.

We also prove an auxiliary exact family.  If $F$ is sandwiched between a monotone path $\Pvec_k$ and an ordered blow-up of $\Pvec_k$, then
\[
\rlt(F)=\lambda(F)=\frac{k-2}{2(k-1)}.
\]
In particular, the hinged ordered matching family considered below has relative density $1/4$.

\subsection*{Organization}

Section~\ref{sec:prelim} recalls ordered relative densities and the binary rich-level reduction.  Section~\ref{sec:templates} proves the template lower bound and the path-blow-up exact family.  Section~\ref{sec:q2b} proves $\lambda(Q_{2,b})=1/2$.  Section~\ref{sec:roles} develops the intrinsic role decomposition for $Q_{2,b}$-free graphs.  Section~\ref{sec:smoothing} proves the finite-template and $b$-rank smoothing lemmas.  Section~\ref{sec:cutdom} proves binary ultrametric cut domination and weighted binary Mantel smoothing.  Section~\ref{sec:q2b-proof} combines these ingredients to prove Theorem~\ref{thm:main}.  Section~\ref{sec:sharpness} gives the binary-level sharpness construction.  Section~\ref{sec:local} records a local obstruction diagnostic for the first non-trivial member $Q_{2,2}$.

\section{Preliminaries}\label{sec:prelim}

An ordered graph is a graph with a fixed linear order on its vertices.  An ordered copy is an injective graph embedding preserving the vertex order.  We write $\Pvec_k$ for the monotone path on $k$ ordered vertices.

\begin{lemma}[Monotonicity]\label{lem:monotonicity}
If $F\subseteq_{<}H$, then $\rlt(F)\le \rlt(H)$.
\end{lemma}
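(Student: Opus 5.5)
The plan is to compare the two extremal numbers host by host and then pass to the infimum. We read $F\subseteq_{<}H$ as saying that $H$ contains an ordered copy of $F$, i.e.\ there is an order-preserving injective embedding of $F$ into $H$. The basic observation is that any $F$-free subgraph is automatically $H$-free.

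The key step is the following. Fix a non-empty ordered host graph $G$ and let $K\subseteq G$ be an $F$-free subgraph. Suppose $K$ contained an ordered copy of $H$, given by an order-preserving injective embedding $\phi\colon H\to K$. Let $\psi\colon F\to H$ be the embedding witnessing $F\subseteq_{<}H$. Then the composition $\phi\circ\psi$ is injective, preserves the vertex order, and maps edges to edges. So it is an ordered copy of $F$ in $K$, which is a contradiction. Hence every $F$-free subgraph of $G$ is $H$-free. Taking the maximum number of edges over each class gives
\[
\exlt(G,F)\le \exlt(G,H).
\]

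To finish, divide this inequality by $e(G)>0$ to get
\[
\frac{\exlt(G,F)}{e(G)}\le\frac{\exlt(G,H)}{e(G)}
\]
for every non-empty ordered host $G$. Taking the infimum over all such $G$ on both sides yields $\rlt(F)\le\rlt(H)$. Both infima range over the same class of hosts, so this step is immediate.

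There is no real obstacle here. The only point needing care is the direction of the containment. A larger forbidden graph is easier to avoid, so forbidding it permits more edges and gives a larger density. The composition-of-embeddings step is exactly what secures this direction.
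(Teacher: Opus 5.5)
Your proof is correct and is the paper's argument: every $F$-free subgraph of a host $G$ is $H$-free (you spell out the composition-of-embeddings step), so $\exlt(G,F)\le\exlt(G,H)$ for every $G$. Dividing by $e(G)$ and taking infima over the same class of hosts then gives the claim.
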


\begin{proof}
For every host $G$, every $F$-free subgraph is $H$-free, since any copy of $H$ contains a copy of $F$.  Hence $\exlt(G,F)\le \exlt(G,H)$ for every $G$, and the claim follows by taking infima.
\end{proof}

\begin{theorem}[Pach--Tardos~\cite{PT06}]\label{thm:pach-tardos}
If $\chilt(F)$ is the interval chromatic number of $F$, then
\[
\pilt(F)=1-\frac1{\chilt(F)-1}.
\]
\end{theorem}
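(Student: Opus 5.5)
This is the Pach--Tardos theorem on the complete-host ordered Tur\'an density, and I would prove it in the standard two-sided way: a lower bound from an interval blow-up construction and an upper bound from the Erd\H{o}s--Stone--Simonovits theorem combined with a Ramsey-type supersaturation step. Throughout, $\pilt(F)$ means $\lim_n \exlt(K_n^{<},F)/\binom n2$, where $K_n^{<}$ is the complete ordered graph on $[n]$. Write $r=\chilt(F)$: the least number of intervals into which the vertex order of $F$ splits so that every interval is independent.

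\textbf{Lower bound.} Partition $[n]$ into $r-1$ consecutive intervals of nearly equal length and take the complete $(r-1)$-partite graph between them. An ordered copy of $F$ in this graph would pull the intervals back to a partition of $V(F)$ into at most $r-1$ independent intervals. That contradicts the definition of $r$. The graph has $\bigl(1-\frac1{r-1}+o(1)\bigr)\binom n2$ edges, which gives $\pilt(F)\ge 1-\frac1{r-1}$.

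\textbf{Upper bound.} Fix $\eps>0$ and let $G\subseteq K_n^{<}$ have at least $\bigl(1-\frac1{r-1}+\eps\bigr)\binom n2$ edges.
\begin{enumerate}
\item Apply the Erd\H{o}s--Stone--Simonovits theorem to the underlying unordered graph. It supplies a copy of the complete $r$-partite graph $K_r(t)$, with $t$ as large as we like, as an unordered subgraph. Its parts $A_1,\dots,A_r$ are arbitrary vertex sets that may interleave in the order.
\item Extract an ordered complete $r$-partite subgraph whose parts are consecutive intervals. This is the key step and the main obstacle. I would handle it with the iterated Erd\H{o}s--Szekeres or ``interval selection'' trick. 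The pattern in which the $r$ parts interleave along the order is a word of length $rt$ over $r$ letters. By repeatedly halving, find subsets $A_i'\subseteq A_i$ of size $t'\to\infty$, as $t\to\infty$, that lie in pairwise disjoint intervals. A concrete way to do this: take the median of one part, and keep either the half of that part before the median or the half after it, choosing the side that retains at least half of some other part. Iterating over all pairs of parts loses only a factor $2^{O(r^2)}$ in size.
\item Relabel so that the sets $A_i'$ appear in increasing order. They span an ordered complete $r$-partite graph with interval parts of size $t'$.
\item Embed $F$. Choose an interval colouring of $F$ with $r$ classes $I_1<\dots<I_r$, and map $I_j$ order-preservingly into $A_j'$; this is possible once $t'\ge |V(F)|$. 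Each edge of $F$ joins distinct classes, because every class is independent. Every cross pair of parts is complete, so each edge of $F$ lands on an edge of $G$.
\end{enumerate}

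\textbf{The halving step in detail.} The only delicate point is checking that the halving in step 2 preserves the separation already achieved between other pairs of parts. The same step can alternatively be done by applying Erd\H{o}s--Stone--Simonovits directly within an interval-partitioned auxiliary structure. I would first try the pairwise median argument. Once two parts are separated by a cut point, passing to subsets keeps them separated. So after handling all $\binom r2$ pairs, the parts are pairwise separated, and hence lie in disjoint intervals.
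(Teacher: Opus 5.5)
The paper does not prove this statement but quotes it from Pach--Tardos; your argument is correct and is essentially the standard proof of that theorem, namely the interval Tur\'an graph on $r-1$ consecutive classes for the lower bound, and Erd\H{o}s--Stone--Simonovits followed by extracting pairwise interval-separated subparts of the copy of $K_r(t)$ for the upper bound. The median step should say explicitly that, for a pair $A_i,A_j$, you keep one half of $A_i$ together with the part of $A_j$ on the opposite side of the median of $A_i$ (at least half of $A_j$), so the median becomes a cut point; each part is then halved only $r-1$ times over the $\binom r2$ pairs, so $t\ge 2^{r-1}|V(F)|$ already suffices.
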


\begin{theorem}[Path densities and blow-up invariance~\cite{RRSS25}]\label{thm:path-known}
For every $k\ge2$,
\[
\rlt(\Pvec_k)=\frac{k-2}{2(k-1)}.
\]
Moreover, relative density is invariant under ordered blow-ups.
\end{theorem}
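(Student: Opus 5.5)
The lower bound for monotone paths comes from a labelling characterisation together with a random labelling. Blow-up invariance will be a supersaturation argument, and the matching upper bound for paths needs a sparse host.

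\emph{Labelling lemma.} An ordered graph $H$ is $\Pvec_k$-free if and only if there is a map $f:V(H)\to\{1,\dots,k-1\}$ with $f(u)<f(v)$ for every edge $uv$ with $u<v$. For the forward direction, take $f(v)$ to be the number of vertices of a longest monotone path ending at $v$. For the converse, labels strictly increase along any monotone path, so such a path has at most $k-1$ vertices.

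\emph{Lower bound $\rlt(\Pvec_k)\ge\frac{k-2}{2(k-1)}$.} Given any host $G$, choose $f(v)$ uniformly and independently from $\{1,\dots,k-1\}$. Keep exactly the edges $uv$ ($u<v$) with $f(u)<f(v)$; by the labelling lemma the result is $\Pvec_k$-free. Each edge survives with probability
\[
\Pr[f(u)<f(v)]=\tfrac12\Bigl(1-\tfrac1{k-1}\Bigr)=\frac{k-2}{2(k-1)}.
\]
Hence some choice of $f$ keeps at least this fraction of $e(G)$.

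\emph{Upper bound.} By the labelling lemma it suffices to find, for every $\eps>0$, a host $G$ such that for every $f:V(G)\to[k-1]$ at most a $\frac{k-2}{2(k-1)}+\eps$ fraction of the edges $u<v$ satisfy $f(u)<f(v)$. The fraction of edges with $f(u)=f(v)$ must then be at least about $\frac1{k-1}$, and the edges with $f(u)\ne f(v)$ must split nearly evenly between increasing and decreasing. No complete host achieves this, because an interval labelling there loses very few edges. I would therefore use a sparse iterated host: shift graphs, or binary-level graphs, in which every vertex sees its neighbours at many nested scales.

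The argument would run by induction on scales, using a density-increment / pigeonhole step. If $f$ beats the bound by $\eps$, some level of the host must beat it. On a suitably regular sub-host of that level the label classes then become more "interval-like". After boundedly many increments this contradicts the bound $k-1$ on the number of labels. The shift-graph independence estimates of Arman--R\"odl--Sales are the natural tool for controlling the equal-label edges, which form an edge-disjoint union of the $k-1$ induced subgraphs $G[f^{-1}(i)]$.

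I expect this upper bound to be the main obstacle. The key point is showing that monochromatic label classes cannot simultaneously be nearly independent and make almost all cross edges increasing.

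\emph{Blow-up invariance.} Let $F^{(t)}$ be an ordered blow-up of $F$. Since $F\subseteq_< F^{(t)}$, Lemma~\ref{lem:monotonicity} gives $\rlt(F)\le\rlt(F^{(t)})$. For the reverse inequality, take a host $G$ witnessing $\rlt(F)+\eps$ and pass to a large ordered blow-up $G^{(N)}$ of $G$. Suppose some $H\subseteq G^{(N)}$ keeps more than a $\rlt(F)+2\eps$ fraction of the edges. Averaging over random transversal copies of $G$ inside $G^{(N)}$ shows that a positive fraction of these copies contain a copy of $F$ in $H$. This gives supersaturation: $\Omega(N^{v(F)})$ copies of $F$ lying over a fixed copy of $F$ in $G$. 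An Erd\H{o}s-type hypergraph Kővári--Sós--Turán argument on the $v(F)$-partite hypergraph of these copies then yields a complete $v(F)$-partite sub-hypergraph with parts of size $t$, which is an ordered copy of $F^{(t)}$ in $H$. Thus $\rlt(F^{(t)})\le\rlt(F)+2\eps$, and letting $\eps\to0$ gives equality.
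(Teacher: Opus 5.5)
The paper does not prove this statement; it imports it from the work of Reiher, R\"odl, Sales and Schacht, so your proposal has to stand on its own. Three parts are sound. Your labelling lemma is correct, and so is the random-labelling lower bound $\rlt(\Pvec_k)\ge\frac{k-2}{2(k-1)}$. The blow-up invariance argument also works: take a host $G$ with $\exlt(G,F)<(\rlt(F)+\eps)e(G)$, pass to random transversals of $G^{(N)}$, apply Markov's inequality, pigeonhole over the finitely many copies of $F$ in $G$, and finish with Erd\H{o}s's theorem on complete $v(F)$-partite subhypergraphs. This gives $\rlt(F^{(t)})\le\rlt(F)+2\eps$, and blow-ups with unequal part sizes follow by monotonicity.

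The genuine gap is the upper bound $\rlt(\Pvec_k)\le\frac{k-2}{2(k-1)}$, which is half of the first claim and which you leave as a plan. No host is specified, and the ``density increment'' has no potential function and no increment lemma. It is also unclear what ``label classes become more interval-like'' means or why the process terminates. The Arman--R\"odl--Sales independence estimates are not what is needed, since the quantity to control is the fraction of increasing pairs under an arbitrary labelling.

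The missing idea is an energy (martingale-variance) argument rather than a density increment, combined with a reduction to binary-level hosts. Fix a labelling $f:B_d\to[k-1]$, for instance the rank labelling from your lemma. Take a level-$\ell$ parent cell whose children have label distributions $\mu_0=m-\delta$ and $\mu_1=m+\delta$, and let $A$ be the strictly upper-triangular $0/1$ matrix. The density of increasing pairs across the cell is $\mu_0^TA\mu_1\le m^TAm+3\|\delta\|_1$, and $m^TAm=\sum_{i<j}m_im_j\le\frac{k-2}{2(k-1)}$. The vectors $\pm\delta$ are increments of the label-distribution martingale along a random branch, and its squared increments sum to at most $1$. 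By Cauchy--Schwarz, only $O(k/\eps^2)$ levels can therefore have increasing density at least $\frac{k-2}{2(k-1)}+\eps$.

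This is exactly Lemma~\ref{lem:local-template-smoothing}, or equivalently Proposition~\ref{prop:weighted-path-smoothing} with $U=B_d$ and $b=k-1$. Theorem~\ref{thm:rich-level} then turns it into the desired host-level upper bound. Without this step, or the original argument of Reiher--R\"odl--Sales--Schacht, your proposal establishes only $\rlt(\Pvec_k)\ge\frac{k-2}{2(k-1)}$ together with blow-up invariance.
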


\begin{definition}[Binary intervals and levels]\label{def:binary-levels}
For $d\ge1$, the depth-$d$ binary interval is the ordered set
\[
B_d=\{0,1\}^d
\]
with lexicographic order.  If $x<y$ are two leaves, their binary level is
\[
\delta(x,y)=\min\{i:x_i\ne y_i\}\in[d].
\]
Equivalently, $x$ and $y$ lie in the two children of a common depth-$(\ell-1)$ fundamental interval, where $\ell=\delta(x,y)$.

A binary-level graph is an arbitrary graph on the ordered leaf set $B_d$.  Let
\[
E_\ell(B_d)=\{xy:x<y,\ \delta(x,y)=\ell\},
\qquad
\tau_{\ell,d}=|E_\ell(B_d)|.
\]
Since there are $2^{\ell-1}$ depth-$(\ell-1)$ fundamental intervals and each contributes a complete bipartite pair of parts of size $2^{d-\ell}$,
\[
\tau_{\ell,d}=2^{\ell-1}2^{2(d-\ell)}=2^{2d-\ell-1}.
\]
For a graph $G\subseteq K_{B_d}$, write
\[
e_\ell(G)=|E(G)\cap E_\ell(B_d)|,
\qquad
 d_\ell(G)=\frac{e_\ell(G)}{\tau_{\ell,d}}.
\]
A level $\ell$ is $\alpha$-rich if $d_\ell(G)\ge\alpha$, and $G$ is $(\alpha,C)$-rich if at least $C$ of its levels are $\alpha$-rich.
\end{definition}

\begin{theorem}[Binary rich-level reduction~\cite{IRVW25}]\label{thm:rich-level}
For an ordered graph $F$ and $\alpha\in[0,1]$, the inequality $\rlt(F)\le\alpha$ holds if and only if for every $\eps>0$ there exists $C=C(F,\eps)$ such that every binary-level graph that is $(\alpha+\eps,C)$-rich contains an ordered copy of $F$.
\end{theorem}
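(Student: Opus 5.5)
The statement is an equivalence, so I would prove the two implications separately. The direction ``$\rlt(F)\le\alpha$ implies forcing'' should follow from a random embedding of a good host into the rich levels. The direction ``forcing implies $\rlt(F)\le\alpha$'' needs the construction of a host family in which every binary level carries equal weight, and I expect that to be the main obstacle.

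\emph{Step 1: $\rlt(F)\le\alpha$ implies forcing.} Fix $\eps>0$. Since $\rlt(F)\le\alpha$, there is an ordered host $H$ on vertices $1<\dots<n$ with
\[
\exlt(H,F)<(\alpha+\eps)e(H).
\]
Set $C=\lceil\log_2 n\rceil$, and let $G$ be a binary-level graph on $B_d$ with rich levels $\ell_1<\dots<\ell_C$. I would embed $H$ into $B_d$ randomly and order-preservingly, as follows.
\begin{itemize}[nosep]
\item Write $i-1$ in binary with $C$ digits, and place these digits, most significant first, in the coordinates $\ell_1,\dots,\ell_C$.
\item Fill the coordinates before $\ell_C$ that are not among the $\ell_t$ with one shared uniformly random string, common to all vertices.
\item Fill each vertex's coordinates after $\ell_C$ independently and uniformly at random.
\end{itemize}
The map $\varphi$ is injective and order-preserving, since lexicographic order is decided at the first coordinate where two images differ, and that coordinate is one of the $\ell_t$. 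For an edge $ij$ of $H$ whose labels first differ at bit $t$, the image $\varphi(i)\varphi(j)$ has level exactly $\ell_t$.

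The key check is that this image is uniformly distributed on $E_{\ell_t}(B_d)$. Every coordinate except the deterministic label bits $\ell_1,\dots,\ell_{t-1}$ of the common prefix is uniformly random. Those fixed coordinates are where I expect trouble; I would absorb them by additionally applying a uniformly random automorphism of the binary tree that flips the coordinates $\ell_1,\dots,\ell_{t-1}$, which preserves levels.

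Given uniformity, $\mathbb E\,|\{e\in E(H):\varphi(e)\in E(G)\}|\ge(\alpha+\eps)e(H)$. So for some choice of $\varphi$, the pull-back subgraph of $H$ has more than $\exlt(H,F)$ edges and therefore contains $F$. Its image under $\varphi$ is an ordered copy of $F$ in $G$.

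\emph{Step 2: forcing implies $\rlt(F)\le\alpha$.} I need hosts $H_d$ such that every $F$-free $H'\subseteq H_d$ satisfies $e(H')\le(\alpha+2\eps)e(H_d)+o(e(H_d))$. The natural candidate is a host whose edges split into $d$ levels of equal size that ``look like'' the levels of $B_d$. I would take $H_d$ to be the random subgraph of $K_{B_d}$ that keeps each edge of level $\ell$ independently with probability
\[
p_\ell=2^{\ell-d},
\]
so that every level has about $4^{d}/2^{d+1}$ edges.

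If $H'$ has density above $\alpha+2\eps$, averaging over the $d$ levels shows that at least $\eps d$ levels of $H'$ have relative density at least $\alpha+\eps$ inside $H_d$. With $d\ge C/\eps$, that is at least $C$ such levels.

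\emph{Main obstacle: transfer from sparse to dense.} These rich levels are rich relative to $H_d$, not relative to $E_\ell(B_d)$, so the hypothesis does not yet apply. I would first try a sparse-regularity or container argument: apply a transference lemma level by level to replace $H'$ by a dense binary-level graph $G'$ with $d_\ell(G')\ge\alpha+\eps/2$ on the same levels, such that ordered copies of $F$ in $G'$ lift back to $H'$.

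If that route is too delicate, I would instead make the host deterministic and iterated: an ordered shift-graph style construction as in~\cite{EHS82,ARS22}. There, each level is a blow-up of a complete bipartite pattern, so a rich level pulls back directly to a rich level of a genuine binary-level graph. Ordered copies are then preserved by blow-up invariance (Theorem~\ref{thm:path-known}).
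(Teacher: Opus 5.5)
The paper does not prove this statement; it is imported from Illingworth--Ranganathan--Versteegen--Williams. Your argument therefore has to stand on its own, and it has a genuine gap in each direction.

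\textbf{Step~1.} The uniformity claim is false, and not only because of the prefix label bits. Take an edge $ij$ first separated at label bit $t$. The coordinates $\ell_{t+1},\dots,\ell_C$ of $\varphi(i)$ and $\varphi(j)$ are deterministic label bits. Every other coordinate in $(\ell_t,\ell_C)$ comes from the shared string, so the two images \emph{agree} there. A uniform element of $E_{\ell_t}(B_d)$ instead has independent uniform suffixes.

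This breaks the expectation count outright. Suppose there are $r$ coordinates strictly between $\ell_t$ and $\ell_{t+1}$, and the level-$\ell_t$ edges of $G$ are exactly the pairs differing in at least one of them. Then $d_{\ell_t}(G)=1-2^{-r}$, yet no edge of $H$ at split $t$ is mapped into $G$.

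Your repair fails as well. A tree automorphism swapping children at depth $\ell_s-1$ reverses the lexicographic order of every pair at level $\ell_s$. So $\psi\circ\varphi$ is no longer order-preserving, and a copy of $F$ in the pull-back need not map to an ordered copy in $G$. Moreover, the required flips depend on $t$, i.e.\ on the edge.

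A correct version needs two things. First, fresh independent randomness below every branching point, so that the two suffixes are independent. Second, a real argument for the bits at the coordinates used for splitting. For instance, take $C$ large compared with $n$ and split at a random $\lceil\log_2 n\rceil$-subset of the rich levels. Then show, by a Fourier/averaging bound, that fixing a few randomly placed coordinates changes the level-$\ell$ edge probability by $o(1)$ on average.

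\textbf{Step~2.} Here the host itself is wrong, so no transference lemma can help. Your $H_d$ has about $d2^{d-1}$ edges, but it has very few triangles:
\begin{itemize}
\item there are $2^{3d-m-M-1}$ triples of leaves with pairwise levels $m,m,M$ (where $m<M$);
\item each such triple survives with probability $2^{M+2m-3d}$;
\item so the expected number of triangles is $\sum_{m=1}^{d-1}(d-m)2^{m-1}=2^d-d-1$.
\end{itemize}
Deleting one edge per triangle therefore leaves, with high probability, a triangle-free subgraph with a $1-o(1)$ fraction of the edges. For $F$ containing a triangle (e.g.\ $Q_{2,b}$) and $\alpha+2\eps<1$, the bound you want is false.

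The shift-graph fallback has the same defect, since shift graphs are triangle-free. Also, blow-up invariance concerns $\rlt$ of blow-ups of $F$; it says nothing about lifting copies of $F$ into a host.

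The missing idea is to blow up before sparsifying. Replace each leaf of $B_d$ by an interval of $N$ vertices, and join the two blocks of each level-$\ell$ pair by a random bipartite graph of density $2^{\ell-d}$. For fixed $d$ and $N\to\infty$, all densities are bounded below. So the dense regularity lemma applies to an $F$-free $H'$ with $e(H')\ge(\alpha+2\eps)e(H)$.

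Pick one random cluster per block, and keep $xy$ when the cluster pair is regular with density at least $\eta 2^{\delta(x,y)-d}$. Once $d\gg C/\eps^2$, this gives, with positive probability, a binary-level graph that is $(\alpha+\eps/2,C)$-rich. The counting lemma then lifts an ordered copy of $F$ in it back to $H'$, because the blocks are intervals ordered like the leaves.
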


\begin{theorem}[King--Lidick\'y--Ouyang--Pfender--Wang--Xiang~\cite{KLOPWX25}]\label{thm:klopwx}
For the chorded paths $Q_{a,b}$ with $1\le b\le a$,
\[
\rlt(Q_{a,b})\le \frac{a}{a+1}.
\]
In particular, $\rlt(Q_{2,2})\le2/3$.
\end{theorem}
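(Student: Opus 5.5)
The plan is to prove the bound through the binary rich-level reduction (Theorem~\ref{thm:rich-level}) with $\alpha=a/(a+1)$. Fix $\eps>0$ and a binary-level graph $G$ on $B_d$ that has at least $C$ levels of density at least $a/(a+1)+\eps$. The aim is to find an ordered copy of $Q_{a,b}$ with a probabilistic union bound. I will choose a random ordered configuration of leaves whose pairs come in at most $a+1$ \emph{edge classes}. Each class should be a single uniformly random edge of one fixed rich level. Each class then fails to be an edge of $G$ with probability at most $1/(a+1)-\eps$. The probability that some class fails is therefore at most $(a+1)\bigl(1/(a+1)-\eps\bigr)<1$, so some outcome realizes every class and yields the copy.

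The first step is the random configuration. I select rich levels $\ell_1<\ell_2<\cdots<\ell_{a+1}$, chosen with large gaps from the $C$ available rich levels. I pick a uniformly random leaf $x$ and build the remaining vertices by flipping coordinates of $x$ at these levels, filling the deeper coordinates independently and uniformly. A pair of leaves that first differ at level $\ell_i$, and whose coordinates below $\ell_i$ are independent and uniform, is a uniformly random element of $E_{\ell_i}(B_d)$. This holds because every fundamental interval at depth $\ell_i-1$ carries the same number $2^{2(d-\ell_i)}$ of level-$\ell_i$ pairs. The order of the resulting leaves is read off from the flipped coordinates, since lexicographic order is decided at the first differing level.

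The second step, which I expect to be the main obstacle, is the combinatorial design. I need to place the vertices of $Q_{a,b}$, meaning its chorded initial segment followed by the monotone tail of length $b$, as leaves of this template so that three things hold. The left-to-right order must be correct. Every edge of $Q_{a,b}$ must be a pair in one of the $a+1$ classes. The classes must not multiply: two edges assigned the same level must be the same random pair, or deterministically coupled so that they count as one event. Generic vertex placements would create more than $a+1$ independent edge events and break the union bound. The hypothesis $b\le a$ is exactly what should make this possible: the tail edges reuse levels already spent on the chorded part, so no new classes are needed. My first attempt is a flip-pattern construction. The $j$-th vertex is obtained from $x$ by flipping a prescribed set of the levels $\ell_1,\dots,\ell_{a+1}$, so that consecutive vertices differ first at a designated level. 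Tail edges then reuse levels $\ell_1,\dots,\ell_b$ on vertices whose deeper coordinates are shared, making each reused edge literally the same random pair in distribution.

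If exact coupling fails, I will fall back to a Ramsey/averaging refinement. Among the $C$ rich levels I will pass to a subsequence along which the densities of $G$ restricted to the relevant sub-intervals stabilize. Then the events for edges in a common class are nearly identical, and the union bound costs only an extra $\eps/2$, absorbed by taking $C=C(a,b,\eps)$ large. Once the configuration exists, Theorem~\ref{thm:rich-level} gives $\rlt(Q_{a,b})\le a/(a+1)$. Specializing to $a=b=2$ gives $\rlt(Q_{2,2})\le 2/3$.
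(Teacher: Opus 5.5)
\textbf{Main gap: the union bound cannot be reduced to $a+1$ events.} A union bound is additive over the failure events of \emph{individual edges}. In any ordered copy of $Q_{a,b}$, the $a+1$ edges of the chorded segment and the $b$ tail edges are $a+b+1$ distinct pairs of leaves, because the embedding is injective. For an adversarial $G$, the events $\{p\notin E(G)\}$ for distinct pairs $p$ are distinct events. Putting two edges on the same level, or coupling them so they have the same distribution, makes their failure \emph{probabilities} equal. It never merges the \emph{events}, so a tail edge cannot ``count as'' an edge of the chorded segment.

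The correct bound is $(a+b+1)(\frac1{a+1}-\eps)$. This exceeds $1$ as soon as $b\ge1$ and $\eps<b/((a+1)(a+b+1))$; for $Q_{2,2}$ it is $\frac53-5\eps$. Your Ramsey/averaging fallback has the same defect: stabilizing densities along a subsequence of levels again only equalizes probabilities. Consequently the hypothesis $b\le a$ never enters your argument validly. At best the plan finds the $(a+1)$-edge chorded segment, and nothing makes the $b$ tail edges free at density $a/(a+1)$. That requires a structural idea rather than a count.

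The paper gives no proof of this statement; it is quoted from King et al. For $a=2$ the paper proves the stronger value $\rlt(Q_{2,b})=\frac12$ (Theorem~\ref{thm:main}), and there the tail is never paid for edge by edge. Tail-starting is inherited backwards along edges, which yields the split of Lemma~\ref{lem:role-split}: $R$ sends no forward edges into $T_b(G)$, $G[T_b(G)]$ is ordered-triangle-free, and $G[R]$ is $\Pvec_{b+1}$-free. The level densities of these pieces are then controlled by smoothing.

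\textbf{Second gap: single-class uniformity is not delivered by your configuration.} Whenever a vertex lies on edges at two pattern levels $\ell<\ell'$, its bit at $\ell'$ is fixed by the order constraint. Already in a triangle $1<2<3$ the ultrametric structure forces two edges onto a common level $\ell$ and the third onto a deeper level $\ell'$, where its two endpoints have prescribed bits $0$ and $1$. Each shallower edge is therefore uniform only on a sub-family of $E_\ell(B_d)$, not on $E_\ell(B_d)$ itself.

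In your flip construction as described, the problem is worse: vertices inherit the same coordinates of $x$ between consecutive pattern levels. Suppose one deletes from $G$ every level-$\ell$ pair whose endpoints agree on coordinates $\ell+1,\ldots,\ell+k$. This keeps the density of every level $\ell\le d-k$ at $1-2^{-k}$. But once your gaps exceed $k$, it destroys every pair of your configuration. Choosing large gaps does not cure this. What is needed is an energy-increment selection of levels, in the spirit of Lemma~\ref{lem:local-template-smoothing}, guaranteeing that for most choices the relevant conditional densities stay close to $d_\ell(G)$.
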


\section{Template lower bounds and path blow-ups}\label{sec:templates}

\begin{definition}[Forward-edge template]
A weighted forward-edge template is a triple $\mathcal T=(Q,\mu,A)$, where $Q$ is a finite label set, $\mu$ is a probability measure on $Q$, and $A\subseteq Q\times Q$ is the set of allowed forward edge label-pairs.  The template supports an ordered graph $F$ if there is a map $\phi:V(F)\to Q$ such that $(\phi(i),\phi(j))\in A$ for every ordered edge $i<j$ of $F$.  If no such map exists, the template is $F$-avoiding.  Define
\[
w(\mathcal T)=\sum_{(q,q')\in A}\mu(q)\mu(q'),
\qquad
\lambda(F)=\sup\{w(\mathcal T):\mathcal T\text{ is }F\text{-avoiding}\}.
\]
\end{definition}

\begin{theorem}[Template lower bound]\label{thm:template-lower}
For every ordered graph $F$,
\[
\rlt(F)\ge \lambda(F).
\]
\end{theorem}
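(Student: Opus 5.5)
Given an $F$-avoiding template $\mathcal T=(Q,\mu,A)$ and any non-empty ordered host $G$, we will exhibit an $F$-free subgraph $H\subseteq G$ with $e(H)\ge w(\mathcal T)\,e(G)$; taking the supremum over templates and the infimum over hosts then yields $\rlt(F)\ge\lambda(F)$. The natural construction is a random labelling. Assign to each vertex $v\in V(G)$ an independent random label $\phi(v)\in Q$ with distribution $\mu$. Let $H$ consist of those edges $uv$ of $G$ with $u<v$ and $(\phi(u),\phi(v))\in A$.

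The first step is to check that $H$ is $F$-free deterministically, for every labelling. Suppose $\psi:V(F)\to V(H)$ were an ordered copy of $F$ in $H$. Each ordered edge $i<j$ of $F$ maps to an edge $\psi(i)<\psi(j)$ of $H$, because $\psi$ preserves order. By the definition of $H$, this edge satisfies $(\phi(\psi(i)),\phi(\psi(j)))\in A$. Hence $\phi\circ\psi:V(F)\to Q$ would be a map showing that $\mathcal T$ supports $F$, contradicting the assumption that $\mathcal T$ is $F$-avoiding. Note that non-injectivity of $\phi\circ\psi$ is harmless, since the definition of support does not require injectivity.

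The second step is the expectation count. For a fixed edge $u<v$ of $G$, the labels $\phi(u)$ and $\phi(v)$ are independent because $u\neq v$. So
\[
\Pr[uv\in H]=\sum_{(q,q')\in A}\mu(q)\mu(q')=w(\mathcal T).
\]
By linearity, $\mathbb E[e(H)]=w(\mathcal T)e(G)$. Some labelling therefore attains $e(H)\ge w(\mathcal T)e(G)$, which gives $\exlt(G,F)\ge w(\mathcal T)e(G)$ for every $G$ and every $F$-avoiding $\mathcal T$.

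There is no serious obstacle here. The only point requiring care is the first step: the allowed pairs $A$ are directed (forward) pairs, and the argument works because ordered copies preserve the direction of every edge.
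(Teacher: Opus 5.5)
Your proposal is correct and is essentially the paper's own proof: you label vertices independently by $\mu$, keep each forward edge whose label pair lies in $A$ (expected retained proportion $w(\mathcal T)$), and note that any ordered copy of $F$ would pull back to a support map. You also state explicitly that $\phi\circ\psi$ need not be injective, which the paper leaves implicit.
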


\begin{proof}
Fix an ordered host $G$ and an $F$-avoiding template $(Q,\mu,A)$.  Label each vertex of $G$ independently according to $\mu$, and keep an ordered edge $uv$, $u<v$, exactly when the label-pair lies in $A$.  The expected retained edge proportion is $w(\mathcal T)$.  If the retained graph contained an ordered copy of $F$, the labels on that copy would support $F$, contradiction.  Thus some $F$-free subgraph of $G$ keeps at least $w(\mathcal T)e(G)$ edges.  Taking the infimum over $G$ and the supremum over $\mathcal T$ gives the claim.
\end{proof}

\begin{corollary}\label{cor:nonbip-lower}
If the underlying graph of $F$ is non-bipartite, then $\lambda(F)\ge1/2$ and hence $\rlt(F)\ge1/2$.
\end{corollary}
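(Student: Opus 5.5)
We exhibit an explicit $F$-avoiding template of weight $1/2$ and then invoke Theorem~\ref{thm:template-lower}. Take the label set $Q=\{0,1\}$ with the uniform measure $\mu(0)=\mu(1)=1/2$. For the allowed forward pairs take
\[
A=\{(0,1),(1,0)\},
\]
so an ordered edge is kept exactly when its two endpoints receive different labels, regardless of which comes first. The weight is then
\[
w(\mathcal T)=\mu(0)\mu(1)+\mu(1)\mu(0)=\tfrac12 .
\]

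It remains to check that this template is $F$-avoiding. Suppose $\phi:V(F)\to\{0,1\}$ supports $F$. Then every ordered edge $i<j$ of $F$ satisfies $(\phi(i),\phi(j))\in A$, that is, $\phi(i)\ne\phi(j)$. Since every edge of the underlying graph of $F$ arises as an ordered edge $i<j$ for its two endpoints, $\phi$ is a proper $2$-colouring of the underlying graph of $F$. This contradicts the assumption that the underlying graph is non-bipartite. Hence the template is $F$-avoiding, and so $\lambda(F)\ge w(\mathcal T)=1/2$ by the definition of $\lambda$ as a supremum.

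Finally, Theorem~\ref{thm:template-lower} gives $\rlt(F)\ge\lambda(F)\ge 1/2$. There is no real obstacle here. The only point to state explicitly is that $A$ must be symmetric, containing both $(0,1)$ and $(1,0)$, so that the support condition translates into an ordinary proper $2$-colouring and does not depend on the vertex order.
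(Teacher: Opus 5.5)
Your proposal is correct and follows the paper's proof: the uniform two-label template with $A=\{(0,1),(1,0)\}$ has weight $1/2$, any support map would be a proper $2$-colouring of the underlying graph, and Theorem~\ref{thm:template-lower} then gives $\rlt(F)\ge1/2$. The only difference is that you write out the avoidance check, which the paper states in one line.
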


\begin{proof}
Use the two-label bipartite template with allowed pairs $(0,1)$ and $(1,0)$ and uniform measure.  It has weight $1/2$ and avoids every non-bipartite graph.
\end{proof}

\begin{corollary}[Path template]\label{cor:path-template}
For every $k\ge2$,
\[
\lambda(\Pvec_k)\ge \frac{k-2}{2(k-1)}.
\]
\end{corollary}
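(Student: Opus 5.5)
We exhibit an explicit $\Pvec_k$-avoiding forward-edge template of weight exactly $\frac{k-2}{2(k-1)}$. Take the label set $Q=\{1,\dots,k-1\}\times\{0,1\}$. Think of the first coordinate as a \emph{rank} and the second as a \emph{side}. Let $\mu$ be uniform, so each label has mass $\frac1{2(k-1)}$. Declare a forward pair $((r,s),(r',s'))$ allowed exactly when $r<r'$ and $s\ne s'$. Alternatively, and perhaps more cleanly, take $Q=[k-1]$ and allow $(r,r')$ iff $r<r'$. That gives weight $\sum_{r<r'}\mu(r)\mu(r')$, which with uniform $\mu$ is $\frac{1}{2}(1-\frac1{k-1})=\frac{k-2}{2(k-1)}$. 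This second template is the one I will use, since it already hits the target value.

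Avoidance is checked as follows. Suppose $\phi:V(\Pvec_k)\to[k-1]$ supports $\Pvec_k$, with vertices $1<2<\dots<k$ and edges $i(i+1)$. Each edge $i<i+1$ requires $(\phi(i),\phi(i+1))\in A$, that is, $\phi(i)<\phi(i+1)$. Chaining these gives $\phi(1)<\phi(2)<\dots<\phi(k)$, which is $k$ distinct values in a set of size $k-1$. This is a contradiction, so the template is $\Pvec_k$-avoiding.

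For the weight, write $m=k-1$ with $\mu(r)=1/m$. The number of ordered pairs $r<r'$ is $\binom m2$, so
\[
w(\mathcal T)=\binom{m}{2}\frac1{m^2}=\frac{m-1}{2m}=\frac{k-2}{2(k-1)}.
\]
By the definition of $\lambda$ as a supremum over avoiding templates, $\lambda(\Pvec_k)\ge\frac{k-2}{2(k-1)}$. The case $k=2$ is trivial: the single label admits no pair, the weight is $0$, and $\Pvec_2$ (a single edge) is not supported.

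There is no genuine obstacle here. The only point that needs care is that allowed pairs are ordered (forward) pairs, so the strict increase along the path is what forces the contradiction. I would remark that, combined with Theorem~\ref{thm:path-known} and Theorem~\ref{thm:template-lower}, this shows the bound is tight.
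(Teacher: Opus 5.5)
Your proof is correct and is essentially the paper's own argument: the uniform template on $[k-1]$ with allowed forward pairs exactly $(i,j)$ with $i<j$ has weight $\binom{k-1}{2}/(k-1)^2=\frac{k-2}{2(k-1)}$, and it avoids $\Pvec_k$ because a support map would need $k$ strictly increasing labels in $[k-1]$. The rank-and-side template you first mention and then drop has weight only $\frac{k-2}{4(k-1)}$, so you were right to drop it.
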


\begin{proof}
Take labels $[k-1]$ with the uniform measure and allow $(i,j)$ exactly when $i<j$.  The weight is $\binom{k-1}{2}/(k-1)^2=(k-2)/(2(k-1))$, and a supported monotone path on $k$ vertices would require $k$ strictly increasing labels in $[k-1]$.
\end{proof}

\begin{definition}[Ordered blow-up of a monotone path]
The ordered blow-up $\Pvec_k(m_1,\ldots,m_k)$ has consecutive independent blocks $V_1<\cdots<V_k$, $|V_i|=m_i$, and all edges between $V_i$ and $V_{i+1}$ for $1\le i<k$.
\end{definition}

\begin{theorem}[Path-blow-up sandwich]\label{thm:path-sandwich}
If
\[
\Pvec_k\subseteq_{<} F\subseteq_{<}\Pvec_k(m_1,\ldots,m_k),
\]
then
\[
\rlt(F)=\lambda(F)=\frac{k-2}{2(k-1)}.
\]
\end{theorem}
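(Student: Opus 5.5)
We prove the theorem by sandwiching both $\rlt(F)$ and $\lambda(F)$ between the same two quantities. Write $c_k=\frac{k-2}{2(k-1)}$. We will show
\[
c_k\le \lambda(\Pvec_k)\le\lambda(F)\le \rlt(F)\le \rlt\bigl(\Pvec_k(m_1,\ldots,m_k)\bigr)=c_k.
\]

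The first inequality is Corollary~\ref{cor:path-template}. The third is Theorem~\ref{thm:template-lower}. The fourth follows from Lemma~\ref{lem:monotonicity}, since $F\subseteq_{<}\Pvec_k(m_1,\ldots,m_k)$. The final equality combines the blow-up invariance in Theorem~\ref{thm:path-known} with $\rlt(\Pvec_k)=c_k$ from the same theorem.

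It remains to prove the second inequality, $\lambda(\Pvec_k)\le\lambda(F)$. This is monotonicity of $\lambda$ under ordered containment. Suppose a template $\mathcal T$ supports $F$ via a map $\phi$, and $\Pvec_k\subseteq_{<}F$ via an order-preserving edge-preserving injection $\iota$. Then $\phi\circ\iota$ supports $\Pvec_k$. Indeed, each ordered edge $i<j$ of $\Pvec_k$ maps to an ordered edge $\iota(i)<\iota(j)$ of $F$, whose label pair lies in $A$.

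Contrapositively, every $\Pvec_k$-avoiding template is $F$-avoiding. The supremum defining $\lambda(F)$ is therefore taken over a larger set, which gives $\lambda(F)\ge\lambda(\Pvec_k)$.

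Since the chain begins and ends at $c_k$, all its terms are equal. In particular $\rlt(F)=\lambda(F)=c_k$.

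The only step needing any care is the final equality, which depends on invoking blow-up invariance correctly. That invariance is stated in Theorem~\ref{thm:path-known} for ordered blow-ups, and $\Pvec_k(m_1,\ldots,m_k)$ is by definition the ordered blow-up of $\Pvec_k$ with block sizes $m_i$, so we can cite it directly. The rest of the argument is formal bookkeeping.
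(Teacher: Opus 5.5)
Your proof is correct and follows essentially the same route as the paper. Both arguments bound $\rlt(F)$ above by monotonicity and blow-up invariance of $\Pvec_k$, and both get $\lambda(F)\ge\frac{k-2}{2(k-1)}$ because the path template of Corollary~\ref{cor:path-template} avoids $F$. You phrase this last step as general monotonicity of $\lambda$ under ordered containment, which also makes the paper's redundant lower bound $\rlt(\Pvec_k)\le\rlt(F)$ unnecessary.
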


\begin{proof}
By monotonicity and Theorem~\ref{thm:path-known},
\[
\rlt(\Pvec_k)\le \rlt(F)\le \rlt(\Pvec_k(m_1,\ldots,m_k))=\rlt(\Pvec_k)=\frac{k-2}{2(k-1)}.
\]
The same path template from Corollary~\ref{cor:path-template} avoids $F$, because $\Pvec_k\subseteq_{<}F$.  Thus $\lambda(F)$ reaches the same value, and Theorem~\ref{thm:template-lower} gives equality.
\end{proof}

\begin{definition}[Hinged ordered matchings]
For $r,s\ge0$, define $H_{r,s}$ with vertex order
\[
a_0<a_1<\cdots<a_r<b_0<b_1<\cdots<b_{r+s}<c_0<c_1<\cdots<c_s
\]
and edge set
\[
\{a_0b_0,b_0c_0\}\cup\{a_i b_i:1\le i\le r\}\cup\{b_{r+j}c_j:1\le j\le s\}.
\]
\end{definition}

\begin{corollary}\label{cor:hinged}
For every $r,s\ge0$,
\[
\rlt(H_{r,s})=\lambda(H_{r,s})=\frac14.
\]
\end{corollary}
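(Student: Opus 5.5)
The plan is to apply the path-blow-up sandwich, Theorem~\ref{thm:path-sandwich}, with $k=3$. This gives the value $\frac{k-2}{2(k-1)}=\frac14$ for both $\rlt$ and $\lambda$, which is exactly the claimed value. It therefore suffices to exhibit ordered embeddings
\[
\Pvec_3\subseteq_{<} H_{r,s}\subseteq_{<}\Pvec_3(m_1,m_2,m_3)
\]
for suitable block sizes $m_1,m_2,m_3$.

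For the lower embedding, I would use the hinge itself. The vertices $a_0<b_0<c_0$ carry the edges $a_0b_0$ and $b_0c_0$, so they form an ordered copy of the monotone path $\Pvec_3$ inside $H_{r,s}$.

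For the upper embedding, I would partition the vertices of $H_{r,s}$ into three consecutive intervals:
\begin{itemize}
\item $V_1=\{a_0,\dots,a_r\}$, of size $m_1=r+1$;
\item $V_2=\{b_0,\dots,b_{r+s}\}$, of size $m_2=r+s+1$;
\item $V_3=\{c_0,\dots,c_s\}$, of size $m_3=s+1$.
\end{itemize}
These blocks are consecutive in the vertex order of $H_{r,s}$, and each block is independent in $H_{r,s}$. Every edge has one of two forms:
\begin{itemize}
\item $a_ib_i$ or $a_0b_0$, which join $V_1$ to $V_2$;
\item $b_0c_0$ or $b_{r+j}c_j$, which join $V_2$ to $V_3$.
\end{itemize}
Hence the identity map on vertices is an order-preserving embedding of $H_{r,s}$ into $\Pvec_3(r+1,r+s+1,s+1)$, since the blow-up contains all $V_1$--$V_2$ and $V_2$--$V_3$ pairs.

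Theorem~\ref{thm:path-sandwich} then yields $\rlt(H_{r,s})=\lambda(H_{r,s})=\frac14$. This holds for all $r,s\ge0$, including the degenerate case $r=s=0$, where $H_{0,0}=\Pvec_3$ itself. No step is a real obstacle; the only point to check is that the edge list contains no $V_1$--$V_3$ edge and no edge inside a block, which is immediate from the definition.
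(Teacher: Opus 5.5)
Your proposal is correct and follows the paper's own proof: the hinge $a_0<b_0<c_0$ gives $\Pvec_3\subseteq_{<}H_{r,s}$, the three consecutive independent blocks give $H_{r,s}\subseteq_{<}\Pvec_3(r+1,r+s+1,s+1)$, and Theorem~\ref{thm:path-sandwich} with $k=3$ finishes. You simply make explicit the block sizes and the edge check that the paper leaves implicit.
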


\begin{proof}
The graph contains the monotone path $a_0<b_0<c_0$ and is contained in an ordered blow-up of $\Pvec_3$.  Apply Theorem~\ref{thm:path-sandwich} with $k=3$.
\end{proof}

\section{\texorpdfstring{Sharp template value for $Q_{2,b}$}{Sharp template value for Q2b}}\label{sec:q2b}

For $b\ge1$, let $Q_{2,b}$ be the ordered graph on $1<2<\cdots<b+3$ with edges
\[
12,\quad 13,\quad 23,\quad 34,\quad45,\quad\ldots,\quad(b+2)(b+3).
\]
Thus $Q_{2,b}$ is a transitive ordered triangle followed by a monotone tail of length $b$.

\begin{theorem}\label{thm:lambda-q2b}
For every $b\ge1$,
\[
\lambda(Q_{2,b})=\frac12.
\]
\end{theorem}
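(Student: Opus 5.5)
The lower bound $\lambda(Q_{2,b})\ge 1/2$ is immediate from Corollary~\ref{cor:nonbip-lower}, because $Q_{2,b}$ contains a triangle. The work is the upper bound: every $Q_{2,b}$-avoiding template $\mathcal T=(Q,\mu,A)$ has $w(\mathcal T)\le 1/2$. I will treat $A$ as a digraph $D$ on $Q$. Then "$\mathcal T$ supports $F$" means there is an order-respecting homomorphism from $F$, with each edge $i<j$ oriented $i\to j$, into $D$.

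First I dispose of loops. If $(q,q)\in A$, mapping every vertex to $q$ supports $F$, so $D$ is loopless. Next comes a Motzkin--Straus type symmetrization. For fixed $D$, the weight is the quadratic form $x^{\mathsf T}Ax$ in $x=\mu$ on the simplex. Suppose $q\ne q'$ both have positive mass and there is no arc between them in either direction. Along the direction $e_q-e_{q'}$ the quadratic coefficient is $A_{qq}+A_{q'q'}-A_{qq'}-A_{q'q}=0$, so the weight is linear along this direction. Moving all mass onto one of $q,q'$ therefore does not decrease $w$ and shrinks the support. The digraph $D$ is unchanged, so avoidance is preserved. Restricting to the support $S$ also preserves avoidance, since a homomorphism into $D[S]$ is one into $D$. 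Iterating, I may assume that $D[S]$ is a loopless semicomplete digraph: every pair of distinct labels carries at least one arc.

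Now I use the role split, mirroring the one in the abstract. Let $T\subseteq S$ be the labels that start a directed walk of length $b$ in $D[S]$, and let $R=S\setminus T$. I record three facts.
\begin{enumerate}[label=(\roman*)]
\item There is no arc from $R$ to $T$, since an arc $v\to w$ with $w\in T$ puts $v$ in $T$.
\item $D[R]$ has no directed cycle, since a cycle would give arbitrarily long walks. Being semicomplete and acyclic, $D[R]$ is a transitive tournament.
\item $D[T]$ contains no transitive triangle $x\to y$, $x\to z$, $y\to z$. Its vertices are automatically distinct because $D$ is loopless, and its sink $z\in T$ would extend it by a walk of length $b$ to a support of $Q_{2,b}$.
\end{enumerate}
A semicomplete digraph with no transitive triangle has at most $3$ vertices, since every tournament on $4$ vertices contains a transitive triangle. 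On $3$ vertices it must be a directed $3$-cycle, since a $2$-cycle $q\leftrightarrow q'$ together with a third vertex adjacent to both always yields a transitive triangle; this is a short case check on the orientations to $q$ and $q'$.

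Finally I sum the weights, with $t=\mu(T)$ and $r=\mu(R)$.
\begin{itemize}
\item Inside $T$: either $T$ is a directed $3$-cycle, contributing at most $t^2/3$; or $|T|\le2$, where a $2$-cycle contributes $2\mu(q)\mu(q')\le t^2/2$ and a single arc contributes even less. So the weight inside $T$ is at most $t^2/2$.
\item Between $T$ and $R$: all arcs go from $T$ to $R$, one direction per pair, contributing at most $tr$.
\item Inside $R$: a transitive tournament contributes $\sum_{\{q,q'\}\subseteq R}\mu(q)\mu(q')\le r^2/2$.
\end{itemize}
Hence $w\le t^2/2+tr+r^2/2=(t+r)^2/2=1/2$.

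The step I expect to need the most care is the symmetrization reduction to a semicomplete support. Once every pair is adjacent, the structure collapses to these small cases. Without the reduction, a direct bound would have to handle $2$-cycles in $D[T]$ interacting with many labels, which is much messier. I also need to double-check the case analysis for a $2$-cycle plus a third vertex, including the orientation in which the third vertex is a common out-neighbour of the $2$-cycle.
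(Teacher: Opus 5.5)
Your proof is correct, and it follows a genuinely different route from the paper's.

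The paper fixes $A$ and takes a maximizer of the quadratic form with inclusion-minimal support $S$. It then extracts the stationarity condition $o(x)+i(x)=2p$ on $S$, which is used twice. First, it rules out sinks in $A[S]$. Second, combined with the disjointness of out- and in-neighbourhoods along arcs of a transitive-triangle-free digraph, it gives the weighted double count $4p^2\le 2p$. So a transitive triangle exists, and its sink is extended by a walk.

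You instead use the discrete Zykov/Motzkin--Straus merging step to reach a semicomplete support. The form is linear along $e_q-e_{q'}$ for non-adjacent labels, which is why removing loops first matters. You then replace the analytic count by a structural classification via the template-level $T/R$ split. The points you flagged all check out:
\begin{enumerate}[label=(\alph*)]
\item $D[T]$ is semicomplete, being an induced subdigraph of $D[S]$.
\item Every orientation of a $2$-cycle plus a third label contains a transitive triangle.
\item A $4$-label semicomplete digraph contains a transitive triangle, via any spanning tournament.
\end{enumerate}

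The two approaches buy different things. The paper's argument is shorter and gives a clean digraph statement with no case analysis: weight above $1/2$ forces a transitive triangle inside a sinkless support. Yours is calculus-free and describes all avoiding templates on semicomplete supports explicitly. It also turns the lower bound into a one-level copy of the host-level upper-bound argument.

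In fact you can sharpen your $R$-term. A transitive tournament on $b+1$ labels has a directed path of length $b$, so $|R|\le b$ and the $R$-term becomes $\beta_b r^2$. Your estimate then reads $tr+\tfrac12t^2+\beta_b r^2$. This is exactly the template analogue of the capacity $C^{(b)}_\ell(T,R)$ in Lemma~\ref{lem:role-capacity}, with Lemma~\ref{lem:role-split} playing the role of your facts (i)--(iii).
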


\begin{proof}
The lower bound is Corollary~\ref{cor:nonbip-lower}, since $Q_{2,b}$ contains a triangle.

For the upper bound, let $\mathcal T=(X,\mu,A)$ be a template of weight larger than $1/2$.  We show that $\mathcal T$ supports $Q_{2,b}$.  If $A$ contains a loop $(x,x)$, then the constant map from $V(Q_{2,b})$ to $x$ is a support map, so assume that $A$ is loopless.

Fix the directed graph $A$ on $X$.  The support-minimal simplex argument below is a directed variant of the quadratic optimization viewpoint of Motzkin and Straus~\cite{MS65}.  Since the simplex of probability measures on $X$ is compact, the quadratic form
\[
P(\nu)=\sum_{(x,y)\in A}\nu(x)\nu(y)
\]
has a maximizer.  Choose a maximizer $\mu$ whose positive support
\[
S=\{x:\mu(x)>0\}
\]
is inclusion-minimal among maximizing measures.  Its value $p=P(\mu)$ is still larger than $1/2$.  The measure $\mu$ is an interior maximizer on the face $\Delta_S$.  Hence, for every perturbation $h$ supported on $S$ with $\sum_{x\in S}h(x)=0$, the first variation vanishes:
\[
0=\left.\frac{d}{dt}\right|_{t=0} P(\mu+th)
 =\sum_{x\in S}h(x)\bigl(o(x)+i(x)\bigr),
\]
where
\[
o(x)=\mu(N^+(x)),\qquad i(x)=\mu(N^-(x)).
\]
Thus $o(x)+i(x)$ is constant on $S$.  Averaging this constant against $\mu$ gives
\[
o(x)+i(x)=2p\qquad (x\in S). \tag{1}
\]
Because $p>1/2$, no vertex of $S$ is a sink in $A[S]$: if $o(x)=0$, then, since $A$ is loopless and $\mu(x)>0$, we have $i(x)\le1-\mu(x)<1$, contradicting (1).

We claim that $A[S]$ contains a transitive triangle, i.e. vertices $x_0,x_1,x_2$ with
\[
x_0\to x_1,
\qquad x_1\to x_2,
\qquad x_0\to x_2.
\]
Suppose not.  For every directed edge $x\to y$, the sets $N^+(x)$ and $N^+(y)$ are disjoint; otherwise a common out-neighbour would form a transitive triangle with $x\to y$.  Similarly $N^-(x)\cap N^-(y)=\emptyset$.  Therefore, for every edge $x\to y$,
\[
o(x)+o(y)\le1,
\qquad
 i(x)+i(y)\le1. \tag{2}
\]
Multiplying the first inequality in (2) by $\mu(x)\mu(y)$ and summing over all directed edges gives
\[
\sum_x \mu(x)o(x)^2+
\sum_x \mu(x)o(x)i(x)\le p.
\]
The second inequality gives
\[
\sum_x \mu(x)o(x)i(x)+
\sum_x \mu(x)i(x)^2\le p.
\]
Adding and using (1),
\[
4p^2=\sum_x \mu(x)(o(x)+i(x))^2\le2p,
\]
which contradicts $p>1/2$.  Hence a transitive triangle exists.

Let $x_0\to x_1\to x_2$ with $x_0\to x_2$ be such a triangle.  Since $S$ has no sink, there is a directed walk of length $b$ starting at $x_2$,
\[
x_2=y_0\to y_1\to\cdots\to y_b.
\]
The labels $y_j$ are allowed to repeat: a template support map is not required to be injective.  The map
\[
1\mapsto x_0,
\quad
2\mapsto x_1,
\quad
3\mapsto x_2,
\quad
3+j\mapsto y_j\quad(1\le j\le b)
\]
supports $Q_{2,b}$.  Thus every $Q_{2,b}$-avoiding template has weight at most $1/2$.
\end{proof}

\section{\texorpdfstring{The intrinsic role split for $Q_{2,b}$}{The intrinsic role split for Q2b}}\label{sec:roles}

Fix $b\ge1$.  For an ordered graph $G$, define
\[
T_b(G)=\{z:\exists z=v_0<v_1<\cdots<v_b\text{ with }v_{i-1}v_i\in E(G)\text{ for }1\le i\le b\},
\]
and
\[
R_\triangle(G)=\{z:\exists x<y<z\text{ with }xy,xz,yz\in E(G)\}.
\]
Thus $T_b(G)$ is the set of vertices that start a monotone tail of length $b$, and $R_\triangle(G)$ is the set of vertices that appear as the right endpoint of an ordered triangle.  Then $G$ contains $Q_{2,b}$ if and only if
\[
T_b(G)\cap R_\triangle(G)\ne\emptyset.\tag{2}
\]
Indeed, the vertex in the intersection is the right endpoint of the triangle and the start of the $b$-edge tail.

\begin{lemma}[Intrinsic role decomposition]\label{lem:role-split}
Let $b\ge1$, let $G$ be $Q_{2,b}$-free, and put
\[
T=T_b(G),\qquad R=V(G)\setminus T.
\]
Then:
\begin{enumerate}[label=(\roman*)]
\item there are no forward edges from $R$ to $T$;
\item $G[T]$ is ordered-triangle-free;
\item $G[R]$ contains no monotone copy of $\Pvec_{b+1}$.
\end{enumerate}
\end{lemma}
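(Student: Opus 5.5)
The plan is to verify each of the three items directly from the definition of $T=T_b(G)$, using only the characterization (2): $G$ contains $Q_{2,b}$ if and only if $T_b(G)\cap R_\triangle(G)\ne\emptyset$. Since $G$ is $Q_{2,b}$-free, no vertex of $T$ is the right endpoint of an ordered triangle. The key observation is that $T$ is closed backwards along forward edges. If $u<z$, $uz\in E(G)$ and $z\in T$, then $u\in T$. Indeed, a monotone tail $z=v_0<\cdots<v_b$ of length $b$ gives the walk $u<z<v_1<\cdots<v_{b-1}$, which is a monotone path of length $b$ starting at $u$, after dropping the last vertex. This uses $b\ge1$; for $b=1$ the tail is simply the edge $uz$.

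For (i), let $uz$ be a forward edge with $u<z$, $u\in R$ and $z\in T$. The backward-closure observation gives $u\in T$, contradicting $u\in R$. So no such edge exists.

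For (ii), suppose $x<y<z$ form an ordered triangle inside $G[T]$. Then $z\in T\cap R_\triangle(G)$, contradicting (2). Only $z\in T$ is needed here, so in fact every ordered triangle of $G$ has its right endpoint in $R$.

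For (iii), suppose $v_0<v_1<\cdots<v_b$ is a monotone $\Pvec_{b+1}$ in $G[R]$. Then $v_0$ starts a monotone tail of length $b$ in $G$, so $v_0\in T$, contradicting $v_0\in R$. I expect no real obstacle. The only point needing care is in (i): the shortened path must still have exactly $b$ edges and be strictly increasing. This holds because the tail's vertices are strictly increasing and $u<z$.
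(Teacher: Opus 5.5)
Your proposal is correct and follows essentially the same route as the paper: you prove (i) by extending a tail backwards along the edge and dropping its last vertex (with the same $b=1$ special case), (ii) from the characterization $T_b(G)\cap R_\triangle(G)=\emptyset$, and (iii) by noting that the first vertex of a monotone $\Pvec_{b+1}$ lies in $T$. The only difference is presentational: you isolate the backward-closure of $T$ as a standalone observation before deriving (i).
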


\begin{proof}
If $x\in R$, $y\in T$, $x<y$, and $xy\in E(G)$, then $y$ starts a monotone tail
\[
y=u_0<u_1<\cdots<u_b.
\]
For $b=1$, the edge $xy$ itself shows that $x\in T$.  For $b\ge2$, the vertices
\[
x<y=u_0<u_1<\cdots<u_{b-1}
\]
form a monotone tail of length $b$ starting at $x$.  In both cases this contradicts $x\in R$.  This proves (i).  If $G[T]$ contained an ordered triangle with right endpoint $z\in T$, then $z\in R_\triangle(G)\cap T_b(G)$, contradicting (2).  This proves (ii).  If $G[R]$ contained a monotone copy of $\Pvec_{b+1}$, then its first vertex would start a monotone tail of length $b$, and hence would belong to $T$, contradiction.
\end{proof}

For a level $\ell$, let $TT_\ell,TR_\ell,RT_\ell,RR_\ell$ denote the number of level-$\ell$ ordered pairs with the indicated roles on the left and right endpoint.  By Lemma~\ref{lem:role-split}, no actual edge is counted by $RT_\ell$.  Put
\[
\beta_b=\frac{b-1}{2b}.
\]
Define the $b$-role capacity
\[
C^{(b)}_\ell(T,R)=\frac{TR_\ell+\frac12TT_\ell+\beta_b RR_\ell}{\tau_{\ell,d}},
\]
and the two excess terms
\[
X^T_\ell=\frac{\left(e_\ell(G[T])-\frac12TT_\ell\right)_+}{\tau_{\ell,d}},
\qquad
X^R_\ell=\frac{\left(e_\ell(G[R])-\beta_bRR_\ell\right)_+}{\tau_{\ell,d}}.
\]

\begin{lemma}[Role-capacity decomposition]\label{lem:role-capacity}
Let $b\ge1$.  For every level $\ell$ of a $Q_{2,b}$-free graph,
\[
d_\ell(G)\le C^{(b)}_\ell(T,R)+X^T_\ell+X^R_\ell.
\]
Consequently, if $d_\ell(G)\ge1/2+\eps$, then at least one of
\[
C^{(b)}_\ell(T,R)\ge\frac12+\frac\eps3,
\qquad
X^T_\ell\ge\frac\eps3,
\qquad
X^R_\ell\ge\frac\eps3
\]
holds.
\end{lemma}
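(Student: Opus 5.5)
The plan is to split the level-$\ell$ edges of $G$ by the roles of their endpoints and bound each class separately. Every level-$\ell$ pair $xy$ with $x<y$ falls into exactly one of the four role classes $TT,TR,RT,RR$. Hence
\[
e_\ell(G)=e_\ell(G[T])+e_\ell(G;T\to R)+e_\ell(G;R\to T)+e_\ell(G[R]),
\]
where the middle terms count edges whose left endpoint lies in the first set and right endpoint in the second. By Lemma~\ref{lem:role-split}(i), the $R\to T$ term vanishes. The $T\to R$ term is trivially at most $TR_\ell$, since each such edge is a level-$\ell$ pair with those roles.

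For the two induced pieces, I would use the elementary inequality $a\le c+(a-c)_+$. With $c=\frac12TT_\ell$ it gives $e_\ell(G[T])\le\frac12TT_\ell+\tau_{\ell,d}X^T_\ell$. With $c=\beta_bRR_\ell$ it gives $e_\ell(G[R])\le\beta_bRR_\ell+\tau_{\ell,d}X^R_\ell$. Adding the bounds for all four classes and dividing by $\tau_{\ell,d}$ yields
\[
d_\ell(G)\le C^{(b)}_\ell(T,R)+X^T_\ell+X^R_\ell.
\]
This step does not use parts (ii) and (iii) of Lemma~\ref{lem:role-split}. Those parts only become relevant later, when the excess terms are controlled by the smoothing lemmas.

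For the consequence, suppose $d_\ell(G)\ge\frac12+\eps$ and that all three alternatives fail. Then $C^{(b)}_\ell<\frac12+\frac\eps3$, $X^T_\ell<\frac\eps3$ and $X^R_\ell<\frac\eps3$. Summing, the right-hand side is less than $\frac12+\eps$, which contradicts the displayed inequality. So at least one alternative holds; the thresholds are even a little generous.

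I do not expect a real obstacle here; the argument is pure bookkeeping. The only point needing care is that the role counts are of ordered pairs (left endpoint, right endpoint) at level $\ell$. Each edge must therefore be charged to exactly one class, and that class must be the one that Lemma~\ref{lem:role-split}(i) forces to be empty, namely left endpoint in $R$ and right endpoint in $T$.
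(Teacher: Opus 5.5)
Your proposal is correct and follows the paper's own proof: you drop the empty $RT$ class using Lemma~\ref{lem:role-split}(i), bound the $TR$ edges by $TR_\ell$, bound the two induced pieces via $a\le c+(a-c)_+$, divide by $\tau_{\ell,d}$, and get the three alternatives by pigeonholing. Your observation that parts (ii) and (iii) of Lemma~\ref{lem:role-split} are not needed for this step is also accurate.
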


\begin{proof}
Since $RT$-edges are absent,
\[
e_\ell(G)=e_\ell(T,R)+e_\ell(G[T])+e_\ell(G[R]).
\]
Also $e_\ell(T,R)\le TR_\ell$, and the definitions of $X^T_\ell,X^R_\ell$ give
\[
e_\ell(G[T])\le \frac12TT_\ell+X^T_\ell\tau_{\ell,d},
\qquad
e_\ell(G[R])\le \beta_bRR_\ell+X^R_\ell\tau_{\ell,d}.
\]
Divide by $\tau_{\ell,d}$.  The alternative follows by pigeonholing.
\end{proof}

\section{Smoothing lemmas}\label{sec:smoothing}

\begin{proposition}[Two-role capacity count bound]\label{prop:role-capacity-bound}
Fix $b\ge1$ and put $\beta_b=(b-1)/(2b)$.  For every role colouring $\chi:\{0,1\}^d\to\{T,R\}$ and every $\eps>0$,
\[
\left|\left\{\ell:
\frac{TR_\ell(\chi)+\frac12TT_\ell(\chi)+\beta_bRR_\ell(\chi)}{\tau_{\ell,d}}
\ge \frac12+\eps
\right\}\right|
\le \frac1{4\eps^2}.
\]
\end{proposition}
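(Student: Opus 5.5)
The plan is to reduce the level-$\ell$ capacity to a local quantity at each depth-$(\ell-1)$ fundamental interval, bound it linearly by the imbalance of $T$-densities between the two children, and then sum the squared imbalances over levels using a martingale (variance-increment) identity along the binary tree.

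\textbf{Step 1: localisation.} For a node $v$ of the binary tree at depth $j$ (a fundamental interval of $2^{d-j}$ leaves), let $m_v\in[0,1]$ be the fraction of its leaves coloured $T$. By Definition~\ref{def:binary-levels}, $E_\ell(B_d)$ is the disjoint union, over the $2^{\ell-1}$ nodes $v$ at depth $\ell-1$, of the complete bipartite graphs between the left child $v_L$ and the right child $v_R$. Each such graph has exactly $2^{2(d-\ell)}$ pairs, so every node contributes equally to $\tau_{\ell,d}$. Write $a=m_{v_L}$ and $c=m_{v_R}$. The normalised contribution of $v$ to the capacity is
\[
a(1-c)+\tfrac12ac+\beta_b(1-a)(1-c).
\]
Since $\beta_b\le\frac12$, this is at most
\[
a-\tfrac12ac+\tfrac12(1-a-c+ac)=\tfrac12+\tfrac12(a-c).
\]
Hence the level-$\ell$ capacity is at most $\frac12+\frac12\,\mathbb E_v\bigl[m_{v_L}-m_{v_R}\bigr]$, where $\mathbb E_v$ is the uniform average over depth-$(\ell-1)$ nodes.

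\textbf{Step 2: Cauchy--Schwarz.} Put
\[
D_\ell=\mathbb E_v\Bigl[\bigl(\tfrac{m_{v_L}-m_{v_R}}{2}\bigr)^2\Bigr].
\]
If level $\ell$ has capacity at least $\frac12+\eps$, then $\eps\le\mathbb E_v\bigl[\frac{m_{v_L}-m_{v_R}}{2}\bigr]\le\sqrt{D_\ell}$, so $D_\ell\ge\eps^2$.

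\textbf{Step 3: telescoping.} Let $M_j=\mathbb E_{|v|=j}[m_v^2]$ be the uniform average over depth-$j$ nodes. Since $m_v=\frac12(m_{v_L}+m_{v_R})$, we have
\[
\tfrac12\bigl(m_{v_L}^2+m_{v_R}^2\bigr)-m_v^2=\Bigl(\tfrac{m_{v_L}-m_{v_R}}{2}\Bigr)^2,
\]
and averaging gives $D_\ell=M_\ell-M_{\ell-1}$. Therefore
\[
\sum_{\ell=1}^d D_\ell=M_d-M_0.
\]
At depth $d$ each $m_v\in\{0,1\}$, so $M_d=\mathbb E[m_v]=m_{\mathrm{root}}$. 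Also $M_0=m_{\mathrm{root}}^2$. Hence $\sum_\ell D_\ell=m_{\mathrm{root}}(1-m_{\mathrm{root}})\le\frac14$.

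Combining Steps 2 and 3, the number of rich levels is at most $\frac{1/4}{\eps^2}=\frac1{4\eps^2}$, as claimed.

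The main point to get right is the algebra in Step 1, in particular the orientation: $TR$ pairs have the $T$-vertex on the left, so $a$ refers to the left child and the bound is $\frac12+\frac12(a-c)$ rather than its mirror image. The rest is the standard variance-increment argument.
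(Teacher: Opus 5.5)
Your proof is correct. The first half, the per-cell bound $\tfrac12+\tfrac12(a-c)$ obtained from $\beta_b\le\tfrac12$, is the paper's computation written with $T$-densities instead of $R$-densities. You differ from the paper in how you count the rich levels.

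The paper never squares anything. It writes the signed level-$\ell$ imbalance as the correlation $2\,\mathbb E\,f(X)(2X_\ell-1)$ with $f=1_R$, and sums over the rich set $L$, where $|L|=m$. It then uses $0\le f\le 1$ to bound $\mathbb E\,f(X)\sum_{\ell\in L}(2X_\ell-1)$ by the expected positive part of a sum of $m$ independent signs. That expectation is at most $\sqrt m/2$, which gives $2\eps m\le\sqrt m$.

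You instead apply Cauchy--Schwarz level by level to get $D_\ell\ge\eps^2$ for your mean squared half-imbalance $D_\ell$. You then sum these over all levels as the squared increments of the Doob martingale of $1_T$, which telescope to $m_{\mathrm{root}}(1-m_{\mathrm{root}})\le\tfrac14$. Both routes give exactly $1/(4\eps^2)$.

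The paper's argument is a one-line duality step. It uses only the boundedness of $f$ and the independence of the coordinate bits, and it relies on the per-cell gain being a signed linear quantity that averages into a single correlation with $2X_\ell-1$.

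Your energy-increment argument is the device the paper itself uses later, in Lemma~\ref{lem:local-template-smoothing}, Proposition~\ref{prop:weighted-path-smoothing} and Theorem~\ref{thm:weighted-mantel}. There the relevant local imbalances are unsigned ($\|\delta_J\|_1$) or squared. Your route therefore makes the smoothing section uniform, and it also gives the slightly sharper, density-sensitive total $m_{\mathrm{root}}(1-m_{\mathrm{root}})$ rather than just $\tfrac14$.
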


\begin{proof}
Let $f$ be the indicator of the role $R$.  In a level-$\ell$ split cell, write $a$ and $c$ for the densities of $R$ in the left and right halves.  The cell capacity equals
\[
(1-a)c+\frac12(1-a)(1-c)+\beta_bac
=\frac12+\frac12(c-a)-\frac1{2b}ac
\le\frac12+\frac12(c-a).
\]
After averaging over level-$\ell$ cells,
\[
S_\ell-\frac12\le\frac12D_\ell,
\]
where
\[
D_\ell=\mathbb P(f(X)=1\mid X_\ell=1)-\mathbb P(f(X)=1\mid X_\ell=0)
=2\mathbb E f(X)(2X_\ell-1).
\]
If $L$ is the set of levels with $S_\ell\ge1/2+\eps$, then $D_\ell\ge2\eps$ on $L$.  Thus, with $m=|L|$,
\[
2\eps m\le\sum_{\ell\in L}D_\ell
=2\mathbb E f(X)\sum_{\ell\in L}(2X_\ell-1)
\le 2\mathbb E\left(\sum_{\ell\in L}(2X_\ell-1)\right)_+
\le \sqrt m,
\]
where the last inequality uses symmetry and Cauchy--Schwarz.  Hence $m\le1/(4\eps^2)$.
\end{proof}

We need a local version of finite-template smoothing.  Let $A\subseteq Q\times Q$ be a fixed template.  Given a labelling $\phi$ of the leaves by $Q$, let $P_\ell$ be the level-$\ell$ density of the template graph.  For a level-$\ell$ parent cell $J$, let $M_J$ be the label distribution inside $J$, and define
\[
K_\ell=\mathbb E_J\sum_{(q,q')\in A}M_J(q)M_J(q').
\]

\begin{lemma}[Local-capacity smoothing]\label{lem:local-template-smoothing}
For every finite $Q$, every template $A\subseteq Q\times Q$, every labelling of the binary leaves by $Q$, and every $\eps>0$,
\[
\left|\{\ell:P_\ell\ge K_\ell+\eps\}\right|
\le \frac{9|Q|}{\eps^2}.
\]
\end{lemma}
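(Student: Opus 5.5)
Write $X=(X_1,\ldots,X_d)$ for a uniform random leaf. For each label $q$ let $f_q=\mathbf 1[\phi(X)=q]$. Let $J$ be a level-$\ell$ parent cell, i.e.\ a depth-$(\ell-1)$ fundamental interval, with left and right halves $J_0,J_1$. Let $L_J(q),R_J(q)$ be the label distributions on $J_0$ and $J_1$, so that $M_J=\tfrac12(L_J+R_J)$. The level-$\ell$ template density is
\[
P_\ell=\mathbb E_J\sum_{(q,q')\in A}L_J(q)R_J(q').
\]
The plan is to compare this with $\sum_A M_J(q)M_J(q')$ cell by cell. Write $L_J=M_J-\tfrac12\Delta_J$ and $R_J=M_J+\tfrac12\Delta_J$, where $\Delta_J=R_J-L_J$. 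Then
\[
L_J(q)R_J(q')=M_J(q)M_J(q')+\tfrac12 M_J(q)\Delta_J(q')-\tfrac12\Delta_J(q)M_J(q')-\tfrac14\Delta_J(q)\Delta_J(q').
\]
Summing over $A$ and using $|A$-sums$|\le$ products of $\ell_1$ norms, I bound the error by
\[
|P_\ell-K_\ell|\le \mathbb E_J\Bigl(\|\Delta_J\|_1+\tfrac14\|\Delta_J\|_1^2\Bigr)\le \tfrac32\,\mathbb E_J\|\Delta_J\|_1 ,
\]
since $\|\Delta_J\|_1\le2$. A level with $P_\ell\ge K_\ell+\eps$ therefore forces
\[
\sum_q \mathbb E_J|\Delta_J(q)|\ge \tfrac23\eps .
\]

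The key step is to show that these local label imbalances are summable over levels, as a Parseval/martingale bound. For fixed $q$, $\Delta_J(q)$ is exactly twice the martingale increment. Set $g_\ell=\mathbb E[f_q\mid X_1,\ldots,X_\ell]$. Then $g_\ell-g_{\ell-1}=\pm\tfrac12\Delta_J(q)$ on cell $J$, hence
\[
\mathbb E_J\Delta_J(q)^2=4\,\mathbb E(g_\ell-g_{\ell-1})^2 .
\]
Orthogonality of martingale increments gives
\[
\sum_\ell \mathbb E(g_\ell-g_{\ell-1})^2\le \mathbb E f_q^2=\mathbb P(\phi(X)=q)=:p_q .
\]
(This is the same mechanism as the Cauchy--Schwarz step in Proposition~\ref{prop:role-capacity-bound}.) To sum over $q$ I will use Cauchy--Schwarz:
\[
\Bigl(\sum_q\mathbb E_J|\Delta_J(q)|\Bigr)^2\le |Q|\sum_q\mathbb E_J\Delta_J(q)^2 .
\]
This yields
\[
\sum_\ell\Bigl(\sum_q\mathbb E_J|\Delta_J(q)|\Bigr)^2\le 4|Q|\sum_q p_q=4|Q| .
\]

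Combining the two steps, each bad level contributes at least $\tfrac49\eps^2$ to the left-hand side. The number of bad levels is therefore at most
\[
\frac{4|Q|}{(4/9)\eps^2}=\frac{9|Q|}{\eps^2},
\]
matching the claimed constant. This suggests the intended bookkeeping: the factor $\tfrac32$ squared is $\tfrac94$, and $4\cdot\tfrac94=9$.

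The main point needing care is the first step, namely getting the constant $\tfrac32$ cleanly. The cross terms $\tfrac12\sum_A(M\Delta'-\Delta M')$ are bounded using $\|M_J\|_1=1$ and $0\le\mathbf 1_A\le1$, giving at most $\|\Delta_J\|_1$. The quadratic term is bounded by $\tfrac14\|\Delta_J\|_1^2\le\tfrac12\|\Delta_J\|_1$. If the accounting comes out slightly worse, only the absolute constant changes. Only the one-sided inequality $P_\ell-K_\ell\le\tfrac32\mathbb E_J\|\Delta_J\|_1$ is needed.
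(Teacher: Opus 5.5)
Your proof is correct and follows the paper's argument. It uses the same cell-by-cell expansion around the midpoint distribution $M_J$ (your $\tfrac32\|\Delta_J\|_1$ is the paper's $3\|\delta_J\|_1$ with $\delta_J=\Delta_J/2$) and the same martingale-orthogonality bound via $\|v\|_1^2\le|Q|\|v\|_2^2$, differing only cosmetically: scalar martingales per label instead of a vector-valued one, and summing squared imbalances over all levels rather than Cauchy--Schwarz over the bad levels, with the identical constant $9|Q|/\eps^2$.
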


\begin{proof}
Fix a level-$\ell$ parent cell $J=J_0\cup J_1$.  Let $\mu_0,\mu_1\in\Delta(Q)$ be the label distributions in the two children, and put
\[
m=\frac{\mu_0+\mu_1}{2},
\qquad
\delta=\frac{\mu_1-\mu_0}{2}.
\]
Then $\mu_0=m-\delta$ and $\mu_1=m+\delta$.  Since the matrix of $A$ has entries in $[0,1]$ and $\|\delta\|_1\le1$,
\[
\mu_0^TA\mu_1
=m^TAm+m^TA\delta-\delta^TAm-\delta^TA\delta
\le m^TAm+3\|\delta\|_1. \tag{3}
\]
Averaging (3) over all level-$\ell$ parent cells gives
\[
P_\ell\le K_\ell+3V_\ell,
\qquad
V_\ell=\mathbb E_J\|\delta_J\|_1.
\]

It remains to bound the total variation energy.  Expose a uniformly random leaf one bit at a time, and let $M_k\in\mathbb R^Q$ be the conditional distribution of its final label after the first $k$ bits are exposed.  Then $(M_k)_{k=0}^d$ is a vector-valued martingale.  At a level-$\ell$ parent cell, the martingale increment is $+\delta_J$ or $-\delta_J$ with equal probability; hence
\[
V_\ell=\mathbb E\|M_\ell-M_{\ell-1}\|_1.
\]
For any set $L$ of $m$ levels, Cauchy--Schwarz, the inequality $\|v\|_1^2\le |Q|\|v\|_2^2$, and martingale orthogonality give
\[
\sum_{\ell\in L}V_\ell
\le
\sqrt{m\sum_{\ell\in L}\mathbb E\|M_\ell-M_{\ell-1}\|_1^2}
\le
\sqrt{m|Q|\sum_{\ell=1}^d\mathbb E\|M_\ell-M_{\ell-1}\|_2^2}
\le \sqrt{m|Q|}.
\]
Indeed, the last sum is at most $\mathbb E\|M_d\|_2^2\le1$.  If $P_\ell\ge K_\ell+\eps$ on every $\ell\in L$, then $V_\ell\ge\eps/3$ on $L$, so
\[
\frac{\eps}{3}m\le\sqrt{m|Q|},
\]
and therefore $m\le9|Q|/\eps^2$.
\end{proof}

\begin{proposition}[Weighted binary $\Pvec_{b+1}$ smoothing]\label{prop:weighted-path-smoothing}
Fix $b\ge1$ and put $\beta_b=(b-1)/(2b)$.  Let $U$ be a subset of a binary interval, and let $H$ be a monotone-$\Pvec_{b+1}$-free graph on $U$.  For every $\eta>0$, the number of levels satisfying
\[
e_\ell(H)\ge\beta_bUU_\ell+\eta\tau_{\ell,d}
\]
is at most
\[
\frac{36(b+1)}{\eta^2}+\frac{b-1}{4b\eta}.
\]
Here $UU_\ell$ is the number of level-$\ell$ ordered pairs with both endpoints in $U$.
\end{proposition}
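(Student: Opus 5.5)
Label each vertex $u\in U$ by its rank $r(u)\in\{1,\dots,b\}$, defined as the number of vertices on the longest monotone path in $H$ that ends at $u$; this is the standard Gallai--Roy/Mirsky labelling. Because $H$ is $\Pvec_{b+1}$-free, every rank lies in $[b]$. If $uv\in E(H)$ with $u<v$, then $r(u)<r(v)$. So $H$ is contained in the template graph of the ranked template $Q=\{0,1,\dots,b\}$, where label $0$ marks leaves outside $U$, with
\[
A=\{(i,j):1\le i<j\le b\}.
\]
Writing $P_\ell$ for the level-$\ell$ density of this template graph, we get $e_\ell(H)\le P_\ell\tau_{\ell,d}$ on every level. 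This reduces the statement to comparing $P_\ell$ with $\beta_bUU_\ell/\tau_{\ell,d}$ on most levels.

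First apply Lemma~\ref{lem:local-template-smoothing} with $|Q|=b+1$ and $\eps=\eta/2$. Outside at most $9(b+1)/(\eta/2)^2=36(b+1)/\eta^2$ levels, this gives $P_\ell<K_\ell+\eta/2$. Here
\[
K_\ell=\mathbb E_J\sum_{i<j}M_J(i)M_J(j).
\]
For a parent cell $J$, let $s_J=\sum_{i\ge1}M_J(i)$ be the $U$-density of $J$. The local capacity satisfies
\[
\sum_{1\le i<j\le b}M_J(i)M_J(j)\le\frac{b-1}{2b}s_J^2=\beta_bs_J^2,
\]
by the usual Tur\'an/Cauchy--Schwarz bound $\sum_{i<j}x_ix_j\le\frac{b-1}{2b}(\sum x_i)^2$. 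Hence $K_\ell\le\beta_b\,\mathbb E_J s_J^2$.

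We still have to compare $\mathbb E_J s_J^2$ with $UU_\ell/\tau_{\ell,d}$. If $a_J,c_J$ denote the $U$-densities of the two children, then $UU_\ell/\tau_{\ell,d}=\mathbb E_J a_Jc_J$ and $s_J=(a_J+c_J)/2$. Therefore
\[
s_J^2-a_Jc_J=\tfrac14(c_J-a_J)^2,
\]
and the error term is
\[
\beta_b\,\mathbb E_J\tfrac14(c_J-a_J)^2=\beta_b W_\ell,
\]
where $W_\ell=\mathbb E(N_\ell-N_{\ell-1})^2$ is the squared increment of the scalar martingale $N_k=\mathbb P(\text{leaf}\in U\mid\text{first }k\text{ bits})$. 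By martingale orthogonality,
\[
\sum_\ell W_\ell\le\mathbb E N_d^2\le1.
\]
So the levels with $\beta_bW_\ell\ge\eta/2$ number at most
\[
\frac{2\beta_b}{\eta}=\frac{b-1}{b\eta}.
\]

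This bound is larger than the claimed $(b-1)/(4b\eta)$ by a factor of $4$. I would recover the constant by using the exact identity $\mathbb E\,\tfrac14(c-a)^2=\mathbb E(N_\ell-N_{\ell-1})^2$ carefully, since the increment is $\pm(c-a)/2$. That suggests the count is $\beta_b/(\eta/2)\cdot\tfrac14\cdot$(appropriate factor), and I would also adjust how $\eta$ is split between the two error sources. Once the constants are fixed, any level outside both exceptional sets satisfies
\[
e_\ell(H)<\beta_bUU_\ell+\eta\tau_{\ell,d},
\]
and adding the two exceptional counts gives the stated bound.

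The main obstacle is this constant bookkeeping in the second error term. The structural steps are the rank labelling, the local smoothing lemma, the Tur\'an bound on the ranked capacity, and the martingale variance budget for $U$. Those are routine. Matching the precise constant $(b-1)/(4b\eta)$ may require a slightly sharper comparison than the one above, for example exploiting that $s^2-ac$ appears only through its $\beta_b$-weighted average.
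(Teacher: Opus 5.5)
You follow the paper's route step for step: the rank labelling into a ranked template (your labels $1,\dots,b$ are the paper's $0,\dots,b-1$ shifted), local-template smoothing at $\eta/2$ for the $36(b+1)/\eta^2$ term, the Tur\'an bound $\sum_{i<j}x_ix_j\le\beta_b(\sum_i x_i)^2$, and $s_J^2-a_Jc_J=\frac14(c_J-a_J)^2$ fed into the martingale of $1_U$. As written, however, you only prove the weaker bound $\frac{36(b+1)}{\eta^2}+\frac{b-1}{b\eta}$. That would still suffice for the rich-level forcing theorem, but it is not the stated constant, and you leave the discrepancy unresolved. Your diagnosis of where the factor $4$ is lost is also off. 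You already use the identity $\mathbb E(N_\ell-N_{\ell-1})^2=\mathbb E_J\frac14(c_J-a_J)^2$ exactly, so revisiting it cannot recover anything.

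The entire loss is in the energy budget $\sum_\ell W_\ell\le\mathbb E N_d^2\le 1$. The starting value $N_0=p:=|U|/2^d$ is deterministic and $N_d=1_U(X)\in\{0,1\}$, so orthogonality of increments gives
\[
\sum_{\ell=1}^d W_\ell=\mathbb E N_d^2-N_0^2=p-p^2=\operatorname{Var}(1_U)\le\frac14 .
\]
Let $m$ be the number of levels with $\beta_bW_\ell\ge\eta/2$. Then $m\cdot\frac{\eta}{2\beta_b}\le\frac14$, so $m\le\frac{\beta_b}{2\eta}=\frac{b-1}{4b\eta}$, and $m=0$ when $b=1$. This is exactly the paper's count, and no re-splitting of $\eta$ is needed. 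Re-splitting would in fact also rescue the stated total even with your crude budget of $1$, but you did not carry it out. For instance, thresholds $3\eta/4$ and $\eta/4$ give $\frac{16(b+1)}{\eta^2}+\frac{2(b-1)}{b\eta}$, which lies below the claimed bound for $\eta\le 1$; for $\eta>1$ no level qualifies, since $e_\ell(H)\le\tau_{\ell,d}$.
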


\begin{proof}
For $b=1$, a monotone-$\Pvec_2$-free graph has no edges, while $\beta_1=0$, so no level satisfies the displayed inequality.  Assume $b\ge2$.

Define the rank of $v\in U$ by
\[
r(v)=\max\{t:\text{there is a monotone path with }t\text{ edges ending at }v\}.
\]
Since $H$ contains no monotone $\Pvec_{b+1}$, we have $r(v)\in\{0,1,\ldots,b-1\}$.  Moreover, if $xy\in E(H)$ with $x<y$, then $r(y)\ge r(x)+1$, and hence $r(x)<r(y)$.

Label vertices of rank $i$ by $i$, and label vertices outside $U$ by $*$.  Consider the template on
\[
Q=\{0,1,\ldots,b-1,*\}
\]
with allowed pairs $(i,j)$ exactly when $0\le i<j\le b-1$.  Let $P_\ell$ be the level-$\ell$ density of this template graph.  Since $E(H)$ is contained in the allowed template graph, it is enough to bound levels with
\[
P_\ell\ge \beta_b\frac{UU_\ell}{\tau_{\ell,d}}+\eta. \tag{4}
\]
By Lemma~\ref{lem:local-template-smoothing}, the levels with $P_\ell\ge K_\ell+\eta/2$ are at most
\[
\frac{9(b+1)}{(\eta/2)^2}=\frac{36(b+1)}{\eta^2}.
\]

For the remaining levels we compare $K_\ell$ to the $\beta_b$-density of $U$.  In a level-$\ell$ parent cell $J=J_0\cup J_1$, let
\[
r_i=\frac{|U\cap J_i|}{|J_i|}\quad(i=0,1),
\qquad
r=\frac{r_0+r_1}{2},
\qquad
\Delta=\frac{r_1-r_0}{2}.
\]
Inside $J$, the local template capacity is at most
\[
\max_{p_0+\cdots+p_{b-1}=r}\sum_{0\le i<j\le b-1}p_ip_j
=\frac12\left(r^2-\min\sum_i p_i^2\right)
\le \frac{b-1}{2b}r^2
=\beta_br^2.
\]
The local normalized value of $UU_\ell$ is $r_0r_1=r^2-\Delta^2$.  Hence
\[
K_\ell-\beta_b\frac{UU_\ell}{\tau_{\ell,d}}
\le \beta_b\mathbb E_J\Delta_J^2. \tag{5}
\]
If (4) holds but $P_\ell<K_\ell+\eta/2$, then the left side of (5) is larger than $\eta/2$, so
\[
\mathbb E_J\Delta_J^2>\frac{\eta}{2\beta_b}.
\]

Finally, $(|U\cap J|/|J|)$ along a random branch is the martingale of conditional expectations of $1_U$, and its level-$\ell$ squared increment is exactly $\Delta_J^2$ averaged over parent cells.  Therefore
\[
\sum_{\ell=1}^d \mathbb E_J\Delta_{\ell,J}^2
\le \operatorname{Var}(1_U)\le\frac14.
\]
The inequality $\mathbb E_J\Delta_J^2>\eta/(2\beta_b)$ can occur for fewer than $\beta_b/(2\eta)$ levels, and hence for at most $(b-1)/(4b\eta)$ levels after harmless rounding.  Combining the two exceptional sets proves the proposition.
\end{proof}

\section{Binary ultrametric cut domination and weighted Mantel}\label{sec:cutdom}

The result in this section is an ultrametric weighted refinement of Mantel's theorem.  Its proof replaces ordinary Zykov symmetrization by a cherry contraction adapted to binary least common ancestors~\cite{Mantel07,Zykov49}.

A \emph{massive binary antichain} is a finite antichain $\mathcal U$ of binary-tree nodes together with positive masses $m_X$ for $X\in\mathcal U$.  If $X,Y\in\mathcal U$, write $X\wedge Y$ for their least common ancestor.  Given a non-negative function $\omega$ on binary-tree nodes, define the pair weight
\[
W(XY)=m_Xm_Y\omega_{X\wedge Y}.
\]
For a graph $H$ on $\mathcal U$, let $W(H)=\sum_{XY\in E(H)}W(XY)$.

\begin{lemma}[Cherry reduction]\label{lem:cherry}
Let $\mathcal U$ be a finite binary antichain with $|\mathcal U|\ge2$.  Then there are distinct $X,Y\in\mathcal U$ such that, with $P=X\wedge Y$,
\begin{enumerate}[label=(\roman*)]
\item no element of $\mathcal U\setminus\{X,Y\}$ lies below $P$;
\item replacing $X,Y$ by $P$ produces another binary antichain;
\item for every $Z\in\mathcal U\setminus\{X,Y\}$,
\[
X\wedge Z=Y\wedge Z=P\wedge Z.
\]
\end{enumerate}
\end{lemma}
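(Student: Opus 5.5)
Choose $X,Y$ with least common ancestor as deep as possible. Concretely, among all pairs of distinct elements of $\mathcal U$ (there is at least one since $|\mathcal U|\ge2$), pick $X\ne Y$ so that the depth of $P=X\wedge Y$ is maximal. Since $\mathcal U$ is an antichain, neither of $X,Y$ lies below the other. Hence $P$ is a proper ancestor of both, and $X$ and $Y$ lie in the two different child subtrees $P0$ and $P1$ of $P$.

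For (i), suppose some $Z\in\mathcal U\setminus\{X,Y\}$ lies below $P$ (meaning $Z$ is a descendant of $P$ or equals $P$). The case $Z=P$ is impossible, since then $Z$ would be an ancestor of $X$, contradicting the antichain property. So $Z$ is a proper descendant of $P$ and lies in $P0$ or $P1$, say in the subtree of the child containing $X$. Then $X\wedge Z$ is a descendant of that child, so it is strictly deeper than $P$. This contradicts the maximality of the depth of $P$. The same argument applies if $Z$ lies on the side containing $Y$.

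For (ii), we check that $(\mathcal U\setminus\{X,Y\})\cup\{P\}$ is an antichain. Take any $Z\in\mathcal U\setminus\{X,Y\}$. By (i), $Z$ is not below $P$. Suppose $Z$ were an ancestor of $P$. Then it would also be an ancestor of $X$, contradicting that $\mathcal U$ is an antichain. So $Z$ and $P$ are incomparable. Also $P\notin\mathcal U\setminus\{X,Y\}$, because $P$ is comparable with $X$. Hence the new family is still a binary antichain.

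For (iii), fix $Z\in\mathcal U\setminus\{X,Y\}$. By (ii), $Z$ and $P$ are incomparable, so $Q:=P\wedge Z$ is a proper ancestor of $P$. The nodes $P$ and $Z$ lie in different child subtrees of $Q$. Since $X$ is below $P$, the common ancestors of $X$ and $Z$ are exactly the common ancestors of $P$ and $Z$. Indeed, any ancestor of $X$ is comparable to $P$; a common ancestor of $X$ and $Z$ strictly below $P$ would put $Z$ below $P$, which (i) rules out; so it is an ancestor of $P$. Therefore $X\wedge Z=P\wedge Z$, and the same argument gives $Y\wedge Z=P\wedge Z$.

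There is no real obstacle here. The only point needing care is the first one, ensuring that $P$ is a proper ancestor of both $X$ and $Y$, which is exactly what the antichain hypothesis provides.
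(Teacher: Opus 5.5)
Your proof is correct and is essentially the paper's argument: choosing the pair $X,Y$ whose least common ancestor is deepest is the same as the paper's choice of a deepest branching node of the subtree spanned by $\mathcal U$. Your checks of (i)--(iii) follow the paper's proof in more detail; in particular, your argument in (iii), that every common ancestor of $X$ and $Z$ is comparable to $P$ and cannot lie strictly below it, is what the paper means by saying that paths from $X$ or $Y$ to outside nodes exit through $P$.
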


\begin{proof}
Take the minimal binary subtree spanned by the nodes of $\mathcal U$, and choose a deepest branching node $P$ of this subtree.  Both child subtrees of $P$ contain an element of $\mathcal U$, and by the choice of $P$ each contains exactly one; call them $X$ and $Y$.  This gives (i).  Since $\mathcal U$ is an antichain, no outside element can contain $P$ or lie below $P$, so replacing $X,Y$ by $P$ preserves the antichain property.  Finally, every path from $X$ or $Y$ to an outside node first exits through $P$, giving the same least common ancestor with that outside node.
\end{proof}

\begin{theorem}[Binary ultrametric cut domination]\label{thm:cutdom}
For every massive binary antichain $\mathcal U$ and every non-negative weight function $\omega$,
\[
\max\{W(H):H\subseteq\binom{\mathcal U}{2}\text{ is triangle-free}\}
\le
\max_{\chi:\mathcal U\to\{0,1\}}
\sum_{\chi(X)\ne\chi(Y)}W(XY).
\]
\end{theorem}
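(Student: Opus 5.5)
The plan is to prove a slightly stronger, mass-sensitive statement by induction on $|\mathcal U|$, using the cherry of Lemma~\ref{lem:cherry} as the induction step. The statement is required for \emph{all} positive masses and \emph{all} non-negative $\omega$, which is what lets the induction hypothesis be applied after masses are merged. The base case $|\mathcal U|\le 2$ is trivial: a single edge is itself a cut.

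For the step, fix a triangle-free $H$ on $\mathcal U$ and a cherry $X,Y$ with $P=X\wedge Y$. By Lemma~\ref{lem:cherry}(iii), for every outside $Z$,
\[
W(XZ)=m_Xm_Z\omega_{P\wedge Z},\qquad W(YZ)=m_Ym_Z\omega_{P\wedge Z}.
\]
So $X$ and $Y$ have proportional weight profiles towards the rest of $\mathcal U$. Define the normalized outside degrees
\[
a=\sum_{Z\in N_H(X)\setminus\{Y\}} m_Z\omega_{P\wedge Z},\qquad
c=\sum_{Z\in N_H(Y)\setminus\{X\}} m_Z\omega_{P\wedge Z}.
\]

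\textbf{Case 1: $XY\notin E(H)$.} Here I would run a Zykov step. Say $a\ge c$; give $Y$ the outside neighbourhood of $X$. This does not decrease $W(H)$, because $m_Ya\ge m_Yc$. It also keeps $H$ triangle-free, since $X,Y$ are non-adjacent twins. Now $X$ and $Y$ are twins, so contract them to $P$ with mass $m_P=m_X+m_Y$. By Lemma~\ref{lem:cherry}(ii) the result is again a massive antichain, and by (iii) each edge $PZ$ carries exactly $W(XZ)+W(YZ)$. So the contracted graph is triangle-free with the same weight. The induction hypothesis gives a cut $\chi'$ of the contracted antichain dominating it. Lifting $\chi'$ by giving $X$ and $Y$ the colour of $P$ yields a cut of $\mathcal U$ of the same weight.

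\textbf{Case 2: $XY\in E(H)$.} This is the main obstacle. Triangle-freeness forces $N(X)\setminus\{Y\}$ and $N(Y)\setminus\{X\}$ to be disjoint, and
\[
W(H)=W(H-X-Y)+m_Xm_Y\omega_P+m_Xa+m_Yc .
\]
I would first try to remove this case by a mass-transfer trick. Fix $M=m_X+m_Y$ and the rest of $H$, and view both $W(H)$ and the best cut containing $X,Y$ on opposite sides as functions of $t=m_X\in(0,M)$. The difference between the two is a quadratic in $t$ with the same leading term $-t^2\omega_P$, so it is affine in $t$. It therefore suffices to check the endpoints $t\to0$ and $t\to M$, where one of the two vertices effectively disappears. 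At those endpoints the claim follows from the induction hypothesis applied to $\mathcal U\setminus\{X\}$ or $\mathcal U\setminus\{Y\}$, both of which are still antichains. One must then place the surviving vertex optimally: put $X$ (mass $M$) opposite to whichever side carries more of its neighbourhood weight, which captures at least the edges through it, because $N(X)$ is independent in $H$.

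If this endpoint reduction turns out to be delicate, the fallback is to strengthen the inductive statement. I would allow a node to carry an internal weight $\iota_P\ge0$ that is credited only when the node is declared ``split''. Contracting an adjacent cherry then records $m_Xm_Y\omega_P$ as internal weight at $P$, and the cut bound is proved for this enriched class.

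I expect the bookkeeping in Case 2 to be the only genuinely non-routine part. Case 1 and the lifting of cuts follow directly from Lemma~\ref{lem:cherry}.
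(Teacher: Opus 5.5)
Your Case 1 is the same Zykov-twin contraction the paper uses, and it is fine. The gap is in Case 2, and it is in the interpolation step. For a \emph{fixed} cut $\chi$ with $X,Y$ on opposite sides, $c_\chi(t)-W_H(t)$ is indeed affine in $t=m_X$, because both carry the same term $t(M-t)\omega_P$. But the best cut is $\max_\chi c_\chi(t)$. So $\max_\chi c_\chi(t)-W_H(t)$ is a maximum of affine functions minus an affine function, which is convex and piecewise linear, not affine. A convex function can be non-negative at $t=0$ and $t=M$ and negative in between, so checking the endpoints proves nothing.

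To use affinity you would need a \emph{single} cut that is good at both endpoints. Your two applications of the induction hypothesis, to $\mathcal U\setminus\{X\}$ with $m_Y=M$ and to $\mathcal U\setminus\{Y\}$ with $m_X=M$, return two unrelated cuts. Separately, placing a vertex on the majority side of an existing outside cut does not capture all of its edges, whatever the independence of its neighbourhood; you would have to keep it inside the inductive antichain. The fallback with internal weights $\iota_P$ is not specified enough to close the gap. A ``split'' node would have to remember how its mass is divided and which of the two disjoint outside neighbourhoods belongs to which part, and you do not say what the contracted graph is or how a cut lifts.

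The missing idea, which is the paper's, is a residual-mass contraction that works at the actual masses and produces one cut. Put $\kappa_Z=\omega_{P\wedge Z}$ and assume $m_X\ge m_Y$.
\begin{enumerate}[label=(\arabic*)]
\item Replace $X,Y$ by $P$ of mass $m_X-m_Y$, joined to $N_H(X)\setminus\{Y\}$. This graph is triangle-free, since $N_H(X)$ is independent, and has weight $W(H-X-Y)+(m_X-m_Y)a$. If $m_X=m_Y$, omit $P$.
\item Apply induction, then put $X$ on $P$'s side and $Y$ on the opposite side.
\item Every outside $Z$ is separated from exactly one of $X,Y$. The part $(m_X-m_Y)m_Z\kappa_Z$ is exactly what the inductive cut already counted on $PZ$. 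Hence the lifted cut exceeds the inductive cut by exactly $W(XY)+m_Y\sum_Z m_Z\kappa_Z$.
\item Since $N_H(X)$ and $N_H(Y)$ are disjoint, $\sum_Z m_Z\kappa_Z\ge a+c$.
\end{enumerate}
So the lifted cut has weight at least
\[
W(H-X-Y)+(m_X-m_Y)a+m_Xm_Y\omega_P+m_Y(a+c)=W(H).
\]
In your language, the lighter vertex pairs with an equal share of the heavier one to form a balanced pair. That pair's contribution $m_Y\sum_Z m_Z\kappa_Z$ is the same for every cut, so only the residual mass needs the induction.
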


\begin{proof}
We induct on $|\mathcal U|$.  The cases $|\mathcal U|\le2$ are immediate.  Let $H$ be a maximum-weight triangle-free graph on $\mathcal U$.  Choose a cherry pair $X,Y$ using Lemma~\ref{lem:cherry}, and put $P=X\wedge Y$.  For every outside node $Z\in\mathcal U\setminus\{X,Y\}$, the cherry property gives a common kernel
\[
\kappa_Z=\omega_{X\wedge Z}=\omega_{Y\wedge Z}=\omega_{P\wedge Z},
\]
so
\[
W(XZ)=m_Xm_Z\kappa_Z,
\qquad
W(YZ)=m_Ym_Z\kappa_Z. \tag{6}
\]

First suppose $XY\notin E(H)$.  For $V\in\{X,Y\}$ define
\[
D(V)=\sum_{Z\in N_H(V)}m_Z\kappa_Z.
\]
If $D(X)\ge D(Y)$, replace the outside neighbourhood of $Y$ by $N_H(X)$; otherwise replace the outside neighbourhood of $X$ by $N_H(Y)$.  By (6), this does not decrease the weight.  It also preserves triangle-freeness: the copied neighbourhood is independent, because it is the neighbourhood of a vertex in a triangle-free graph, and $X,Y$ remain non-adjacent.

Thus we may assume that $X,Y$ are non-adjacent twins with a common outside neighbourhood $N$.  Replace $X,Y$ by the node $P$ of mass $m_P=m_X+m_Y$, adjacent exactly to $N$.  The resulting smaller graph $H'$ is triangle-free and has exactly the same weight as $H$.  By induction, $H'$ has weight at most a cut of the smaller antichain.  Expanding $P$ back into $X,Y$ with the same colour gives a cut of the original antichain with the same weight.  Hence the desired inequality holds in this case.

Now suppose $XY\in E(H)$.  Then
\[
N_H(X)\cap N_H(Y)=\emptyset,
\]
otherwise a common neighbour would form a triangle with $XY$.  Assume $m_X\ge m_Y$; the other case is symmetric.  Delete $X,Y$ and insert $P$ with residual mass
\[
m_P=m_X-m_Y.
\]
If $m_P=0$, omit $P$.  Join $P$ to the outside set $N_H(X)$ and leave the outside graph unchanged.  The resulting graph $H'$ is triangle-free, since $N_H(X)$ is independent.

By induction, the smaller antichain has a cut of weight at least $W(H')$.  Expand this cut by putting $X$ on the side of $P$ and $Y$ on the opposite side; if $m_P=0$ and $P$ was omitted, put $X$ and $Y$ on opposite sides arbitrarily.  The expanded cut contains the cherry edge $XY$.  For every outside node $Z$, exactly one of $X,Y$ is separated from $Z$, so the expanded cut receives the baseline contribution
\[
m_Ym_Z\kappa_Z.
\]
In addition, whenever the smaller cut separates $P$ from $Z$, it receives the residual contribution
\[
(m_X-m_Y)m_Z\kappa_Z.
\]
Consequently, the expanded cut has weight at least
\[
W(H')+W(XY)+m_Y\sum_{Z\notin\{X,Y\}}m_Z\kappa_Z. \tag{7}
\]
The graph $H'$ has weight
\[
W(H')=
W(H[\mathcal U\setminus\{X,Y\}])
+(m_X-m_Y)\sum_{Z\in N_H(X)}m_Z\kappa_Z.
\]
Combining this with (7), and using the disjointness of $N_H(X)$ and $N_H(Y)$, the expanded cut has weight at least
\[
\begin{aligned}
&W(H[\mathcal U\setminus\{X,Y\}])+W(XY)
 +(m_X-m_Y)\sum_{Z\in N_H(X)}m_Z\kappa_Z
 +m_Y\sum_{Z\notin\{X,Y\}}m_Z\kappa_Z\;\\
&\ge
W(H[\mathcal U\setminus\{X,Y\}])+W(XY)
 +m_X\sum_{Z\in N_H(X)}m_Z\kappa_Z
 +m_Y\sum_{Z\in N_H(Y)}m_Z\kappa_Z\\
&=W(H).
\end{aligned}
\]
This completes the induction.
\end{proof}

\begin{theorem}[Weighted binary Mantel smoothing]\label{thm:weighted-mantel}
Let $U$ be a subset of a depth-$d$ binary interval, and let $H$ be an ordered-triangle-free graph on $U$.  For every $\eta>0$, the number of levels satisfying
\[
e_\ell(H)\ge\frac12UU_\ell+\eta\tau_{\ell,d}
\]
is at most $1/(2\eta)$.
\end{theorem}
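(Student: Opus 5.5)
The plan is to apply Theorem~\ref{thm:cutdom} once, with a weight function supported on the bad levels, and then bound the resulting cut by a martingale energy estimate, in the spirit of Proposition~\ref{prop:role-capacity-bound}. First, note that every triangle of an ordered graph is automatically an ordered (transitive) triangle once its vertices are sorted. So $H$ is simply triangle-free as an unordered graph.

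Let $L$ be the set of levels $\ell$ with $e_\ell(H)\ge\frac12UU_\ell+\eta\tau_{\ell,d}$, and put $m=|L|$. Take as massive binary antichain the leaves in $U$, each with mass $1$. Two leaves $x<y$ at level $\ell$ have least common ancestor a node at depth $\ell-1$. Define $\omega$ on nodes by $\omega_P=1/\tau_{\ell,d}$ if $P$ has depth $\ell-1$ with $\ell\in L$, and $\omega_P=0$ otherwise. Then
\[
W(H)=\sum_{\ell\in L}\frac{e_\ell(H)}{\tau_{\ell,d}}\ge\sum_{\ell\in L}\frac{UU_\ell}{2\tau_{\ell,d}}+m\eta .
\]
By Theorem~\ref{thm:cutdom} there is a colouring $\chi:U\to\{0,1\}$ whose cut weight is at least $W(H)$. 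Hence it suffices to show
\[
\sum_{\ell\in L}\frac{\mathrm{cut}_\ell(\chi)-\frac12UU_\ell}{\tau_{\ell,d}}\le\frac12,
\]
where $\mathrm{cut}_\ell(\chi)$ counts level-$\ell$ pairs in $U$ with different colours.

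For this, encode $\chi$ as a spin function $s$ on all leaves: $s(x)=\pm1$ on $U$ according to $\chi(x)$, and $s(x)=0$ off $U$. A pair in $U$ is cut exactly when $\frac12(1-s(x)s(y))=1$. So for a level-$\ell$ parent cell $J=J_0\cup J_1$ the local excess, normalized by $|J_0||J_1|$, is $-\frac12 a_0a_1$, where $a_i$ is the average of $s$ on $J_i$. Writing $a_0a_1=M_J^2-\Delta_J^2$, with $M_J=(a_0+a_1)/2$ and $\Delta_J=(a_1-a_0)/2$, the level-$\ell$ normalized excess is
\[
\tfrac12\,\mathbb E_J\bigl(\Delta_J^2-M_J^2\bigr)\le\tfrac12\,\mathbb E_J\Delta_J^2 .
\]
Exposing a uniformly random leaf bit by bit, $\Delta_J$ is exactly the $\pm$ increment at step $\ell$ of the martingale of conditional expectations of $s$. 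Orthogonality of increments then gives
\[
\sum_{\ell}\mathbb E_J\Delta_J^2\le\mathbb E\,s^2\le1 .
\]
Summing only over $\ell\in L$, the total excess is at most $\frac12$. Therefore $m\eta\le\frac12$, i.e.\ $m\le1/(2\eta)$.

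The main obstacle is the choice of the node weight $\omega$, which must make $W$ exactly the normalized sum over bad levels so that the cherry-based Theorem~\ref{thm:cutdom} applies. The other delicate point is the sign bookkeeping showing that the cut excess over the half-baseline is $-\frac12\mathbb E\, a_0a_1$ and hence controlled by squared martingale increments. Dropping the $-M_J^2$ term is what makes the bound clean.
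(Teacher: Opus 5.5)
Your proposal is correct and follows essentially the same route as the paper. Both apply Theorem~\ref{thm:cutdom} once, with unit masses and node weights $1/\tau_{\ell,d}$ on the bad levels, encode the cut by a $\pm1/0$ spin function, use the identity that the normalized level-$\ell$ cut excess equals $-\tfrac12\mathbb E_J a_0a_1=\tfrac12\mathbb E_J(\Delta_J^2-M_J^2)$, and finish with the martingale energy bound $\sum_\ell\mathbb E_J\Delta_J^2\le1$; you also state explicitly that ordered-triangle-free is the same as triangle-free, which the paper uses without comment.
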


\begin{proof}
Let $S$ be the set of levels satisfying the displayed inequality.  Apply Theorem~\ref{thm:cutdom} to the leaf antichain $U$, with all masses equal to $1$, and with level weights
\[
\omega_\ell=
\begin{cases}
1/\tau_{\ell,d},&\ell\in S,\\
0,&\ell\notin S.
\end{cases}
\]
There is therefore a cut $\chi:U\to\{0,1\}$ whose selected-level weighted edge count is at least that of $H$.

Extend $\chi$ to a function $f$ on the whole binary interval by setting
\[
f(x)=
\begin{cases}
 1,&x\in U,
\chi(x)=1,\\
-1,&x\in U,
\chi(x)=0,\\
0,&x\notin U.
\end{cases}
\]
Fix a level-$\ell$ parent cell $J=J_0\cup J_1$, and write $n=|J_0|=|J_1|$.  Let
\[
u_i=\frac{|U\cap J_i|}{n},
\qquad
z_i=\frac1n\sum_{x\in J_i}f(x)
\qquad (i=0,1).
\]
If $a_i$ and $b_i$ are the normalized densities in $J_i$ of colour $1$ and colour $0$, then $u_i=a_i+b_i$ and $z_i=a_i-b_i$.  The normalized number of cut pairs across $J_0,J_1$ is
\[
a_0b_1+b_0a_1
=\frac{u_0u_1-z_0z_1}{2},
\]
while the normalized number of pairs with both endpoints in $U$ is $u_0u_1$.  Hence, after averaging over all level-$\ell$ parent cells,
\[
\frac{\operatorname{cut}_\ell(\chi)-\frac12UU_\ell}{\tau_{\ell,d}}
=-\frac12\mathbb E_J(z_0z_1). \tag{8}
\]
Now put
\[
m=\frac{z_0+z_1}{2},
\qquad
\Delta=\frac{z_1-z_0}{2}.
\]
Then $-z_0z_1=\Delta^2-m^2\le\Delta^2$, so (8) gives
\[
\frac{\operatorname{cut}_\ell(\chi)-\frac12UU_\ell}{\tau_{\ell,d}}
\le \frac12\mathbb E_J\Delta_{\ell,J}^2. \tag{9}
\]
Along a uniformly random branch, the conditional averages of $f$ form a real-valued martingale, and the squared increment at level $\ell$ is exactly $\Delta_{\ell,J}^2$ on the parent cell $J$.  Therefore
\[
\sum_{\ell=1}^d\mathbb E_J\Delta_{\ell,J}^2
\le \mathbb E f^2-(\mathbb E f)^2\le1. \tag{10}
\]
Summing (9) over $\ell\in S$, the total selected-level cut excess above the half-baseline is at most $1/2$.  Since the cut dominates $H$ in the selected weighted sum, while every $\ell\in S$ contributes at least $\eta$ for $H$, we get
\[
\eta |S|\le\frac12.
\]
This proves the theorem.
\end{proof}

\section{Proof of the exact value}\label{sec:q2b-proof}

\begin{theorem}[Rich-level forcing for $Q_{2,b}$]\label{thm:q2b-rich-forcing}
For every $b\ge1$ and every $\eps>0$ there exists $C=C(b,\eps)$ such that every binary-level graph with at least $C$ levels of density at least $1/2+\eps$ contains $Q_{2,b}$.
\end{theorem}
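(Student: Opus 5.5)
We argue by contraposition. Let $G$ be a $Q_{2,b}$-free binary-level graph on $B_d$, and let $L$ be the set of levels $\ell$ with $d_\ell(G)\ge 1/2+\eps$. We will show that $|L|$ is bounded by a constant depending only on $b$ and $\eps$. Taking $C(b,\eps)$ to be one more than this constant then proves the theorem.

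We first apply Lemma~\ref{lem:role-split} with the intrinsic split $T=T_b(G)$ and $R=V(G)\setminus T$. Then $G$ has no forward edges from $R$ to $T$, the graph $G[T]$ is ordered-triangle-free, and $G[R]$ is monotone-$\Pvec_{b+1}$-free. By Lemma~\ref{lem:role-capacity}, every $\ell\in L$ lies in at least one of three sets:
\[
L_1=\{\ell: C^{(b)}_\ell(T,R)\ge \tfrac12+\tfrac\eps3\},\quad
L_2=\{\ell: X^T_\ell\ge \tfrac\eps3\},\quad
L_3=\{\ell: X^R_\ell\ge \tfrac\eps3\}.
\]
It therefore suffices to bound each of these three sets separately.

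For $L_1$ we use the role colouring $\chi$ that sends $T\mapsto T$ and $R\mapsto R$. The capacity $C^{(b)}_\ell(T,R)$ is exactly the quantity in Proposition~\ref{prop:role-capacity-bound}, so applying that proposition with $\eps/3$ gives $|L_1|\le 9/(4\eps^2)$. For $L_2$, note that $X^T_\ell\ge\eps/3$ means $e_\ell(G[T])\ge \frac12 TT_\ell+\frac\eps3\tau_{\ell,d}$. Since $G[T]$ is ordered-triangle-free on $U=T$, Theorem~\ref{thm:weighted-mantel} with $\eta=\eps/3$ gives $|L_2|\le 3/(2\eps)$. For $L_3$, the condition is $e_\ell(G[R])\ge\beta_b RR_\ell+\frac\eps3\tau_{\ell,d}$. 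Since $G[R]$ is monotone-$\Pvec_{b+1}$-free on $U=R$, Proposition~\ref{prop:weighted-path-smoothing} with $\eta=\eps/3$ gives $|L_3|\le 324(b+1)/\eps^2+3(b-1)/(4b\eps)$.

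Summing the three bounds, $|L|$ is at most an explicit $C_0(b,\eps)$, and we set $C=\lfloor C_0\rfloor+1$. A binary-level graph with $C$ levels of density at least $1/2+\eps$ therefore cannot be $Q_{2,b}$-free.

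The point that needs the most care is applying the smoothing results to subgraphs on a set $U$ that depends on $G$. The levels of $G[T]$ and $G[R]$ must be measured against the full $\tau_{\ell,d}$ of $B_d$, and the baselines $TT_\ell$ and $RR_\ell$ must be ordered-pair counts within $U$. Theorem~\ref{thm:weighted-mantel} and Proposition~\ref{prop:weighted-path-smoothing} are stated in exactly this normalization for arbitrary subsets $U$ of the binary interval, so no new argument is needed beyond matching the parameters. Checking this matching is the step I would verify most carefully, since all the substantive difficulty lives in the results already proved.
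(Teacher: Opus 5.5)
Your proposal is correct and is essentially the paper's own proof. It uses the same intrinsic split $T=T_b(G)$, $R=V(G)\setminus T$, the same three-way pigeonhole from Lemma~\ref{lem:role-capacity}, and the same three counting bounds (Proposition~\ref{prop:role-capacity-bound} at threshold $\eps/3$, Theorem~\ref{thm:weighted-mantel} and Proposition~\ref{prop:weighted-path-smoothing} with $\eta=\eps/3$), giving the same constant $C(b,\eps)$; the normalization check you flag is harmless, since both smoothing results are stated for an arbitrary subset $U$ with baseline $UU_\ell$ and the full $\tau_{\ell,d}$.
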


\begin{proof}
Fix $b\ge1$, and let $G$ be a $Q_{2,b}$-free binary-level graph.  Put $T=T_b(G)$ and $R=V(G)\setminus T$.  For any level $\ell$ with $d_\ell(G)\ge1/2+\eps$, Lemma~\ref{lem:role-capacity} implies that one of the three alternatives holds:
\[
C^{(b)}_\ell(T,R)\ge\frac12+\frac\eps3,
\qquad
X^T_\ell\ge\frac\eps3,
\qquad
X^R_\ell\ge\frac\eps3.
\]
The first alternative occurs on at most $9/(4\eps^2)$ levels by Proposition~\ref{prop:role-capacity-bound} with threshold $\eps/3$.

For the second alternative, $G[T]$ is triangle-free by Lemma~\ref{lem:role-split}.  Theorem~\ref{thm:weighted-mantel} with $\eta=\eps/3$ bounds the number of levels with $X^T_\ell\ge\eps/3$ by at most $3/(2\eps)$.

For the third alternative, $G[R]$ is monotone-$\Pvec_{b+1}$-free by Lemma~\ref{lem:role-split}.  Proposition~\ref{prop:weighted-path-smoothing} with $\eta=\eps/3$ bounds the number of levels with $X^R_\ell\ge\eps/3$ by at most
\[
\frac{324(b+1)}{\eps^2}+\frac{3(b-1)}{4b\eps}.
\]
Therefore the number of $(1/2+\eps)$-rich levels in any $Q_{2,b}$-free binary-level graph is bounded by a finite function of $b$ and $\eps$, for instance
\[
C(b,\eps)=1+\frac{9}{4\eps^2}+\frac{3}{2\eps}+\frac{324(b+1)}{\eps^2}+\frac{3(b-1)}{4b\eps}.
\]
This proves the contrapositive of the theorem.
\end{proof}

\begin{theorem}[Exact triangle-tail value]\label{thm:q2b-exact}
For every $b\ge1$,
\[
\rlt(Q_{2,b})=\frac12.
\]
\end{theorem}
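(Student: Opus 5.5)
The statement follows by combining the two independent sides already established, so the plan is to assemble them rather than prove anything new.

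For the lower bound I will invoke Theorem~\ref{thm:template-lower} together with Theorem~\ref{thm:lambda-q2b}. Together they give
\[
\rlt(Q_{2,b})\ge\lambda(Q_{2,b})=\frac12 .
\]
Equivalently, Corollary~\ref{cor:nonbip-lower} alone already suffices, since $Q_{2,b}$ contains a triangle.

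For the upper bound I will apply the binary rich-level reduction, Theorem~\ref{thm:rich-level}, with $F=Q_{2,b}$ and $\alpha=1/2$. Its ``if'' direction asks for the following: for every $\eps>0$ there is $C=C(Q_{2,b},\eps)$ such that every $(1/2+\eps,C)$-rich binary-level graph contains an ordered copy of $Q_{2,b}$. This is exactly Theorem~\ref{thm:q2b-rich-forcing}, with $C=C(b,\eps)$ given there. Note that $b$ is fixed, so $C$ depends only on $F$ and $\eps$, as required. Hence $\rlt(Q_{2,b})\le 1/2$, and combining with the lower bound gives equality.

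The only point needing care is matching the quantifiers and conventions of Theorem~\ref{thm:rich-level} to those of Theorem~\ref{thm:q2b-rich-forcing}. ``At least $C$ levels of density at least $1/2+\eps$'' is precisely $(1/2+\eps,C)$-richness. Also, the forcing statement has to hold for every depth $d$, and the bound $C(b,\eps)$ is indeed uniform in $d$. Since all the substantive work sits in the earlier lemmas (the role split, Proposition~\ref{prop:role-capacity-bound}, Proposition~\ref{prop:weighted-path-smoothing} and Theorem~\ref{thm:weighted-mantel}), I do not expect a genuine obstacle here beyond this bookkeeping.
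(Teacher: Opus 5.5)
Your proposal is correct and is exactly the paper's argument: the lower bound comes from Theorem~\ref{thm:template-lower} combined with Theorem~\ref{thm:lambda-q2b}, and the upper bound comes from Theorem~\ref{thm:q2b-rich-forcing} fed into the ``if'' direction of Theorem~\ref{thm:rich-level}, with $C(b,\eps)$ uniform in the depth. You are also right that only $\lambda(Q_{2,b})\ge 1/2$ is needed here, so Corollary~\ref{cor:nonbip-lower} alone suffices for the lower bound.
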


\begin{proof}
The lower bound follows from Theorem~\ref{thm:template-lower} and Theorem~\ref{thm:lambda-q2b}.  The upper bound follows from Theorem~\ref{thm:q2b-rich-forcing} and the binary rich-level reduction, Theorem~\ref{thm:rich-level}.
\end{proof}

\section{Sharpness inside binary-level hosts}\label{sec:sharpness}

The rich-level threshold $1/2$ cannot be lowered.  The sharp examples are already present inside the binary model used in the proof.

For a leaf $x=(x_1,\ldots,x_d)\in B_d$, write
\[
\sigma(x)=x_1+\cdots+x_d \pmod 2.
\]
Let $G_d^\sharp$ be the following graph on $B_d$, for $d\ge2$.  First keep every pair $xy$ with $\sigma(x)\ne\sigma(y)$.  Then, only on the deepest level $d$, delete exactly half of the sibling edges.  Equivalently, $G_d^\sharp$ is a subgraph of the parity cut, with all parity-cross edges kept on levels $1,\ldots,d-1$ and exactly $2^{d-2}$ of the $2^{d-1}$ parity-cross sibling edges kept on level $d$.

\begin{proposition}[Binary-level sharpness]\label{prop:binary-sharpness}
For every $b\ge1$ and every $d\ge2$, the graph $G_d^\sharp$ is $Q_{2,b}$-free and
\[
d_\ell(G_d^\sharp)=\frac12\qquad(1\le \ell\le d).
\]
Consequently, for every $\alpha<1/2$ and every $C$ there exists a $Q_{2,b}$-free binary-level graph with at least $C$ levels of density larger than $\alpha$.
\end{proposition}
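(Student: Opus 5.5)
The plan is to verify the two claimed properties of $G_d^\sharp$ directly from the definition of the parity cut. Freeness will come from bipartiteness, and the level densities from an explicit count of parity-cross pairs inside each parent cell. The consequence then follows by taking $d$ large.

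\emph{Step 1: $Q_{2,b}$-freeness.} By construction $G_d^\sharp$ is a subgraph of the parity cut $\{xy:\sigma(x)\ne\sigma(y)\}$. That graph is bipartite, with parts $\sigma^{-1}(0)$ and $\sigma^{-1}(1)$. The underlying graph of $Q_{2,b}$ contains the triangle $123$, so it is non-bipartite. An ordered copy of $Q_{2,b}$ is in particular an unordered copy, so it cannot lie in a bipartite graph. Hence $G_d^\sharp$ is $Q_{2,b}$-free for every $b\ge1$.

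\emph{Step 2: density on levels $\ell<d$.} Fix a level $\ell\le d-1$ and a depth-$(\ell-1)$ fundamental interval with children $J_0,J_1$. Each child has size $2^{d-\ell}$ and consists of the leaves with a fixed prefix of length $\ell$, while the remaining $d-\ell\ge1$ coordinates are free. Flipping the last coordinate is a parity-reversing involution on $J_1$, so exactly half of $J_1$ has each parity. Thus for every $x\in J_0$, exactly $2^{d-\ell-1}$ of the $y\in J_1$ satisfy $\sigma(y)\ne\sigma(x)$. All such pairs are kept on levels $\ell<d$, so each cell contributes exactly half of its $2^{2(d-\ell)}$ pairs. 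Summing over cells gives $d_\ell(G_d^\sharp)=1/2$.

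\emph{Step 3: density on level $d$.} Level-$d$ pairs are sibling pairs differing only in coordinate $d$, so they always have different parity. There are $\tau_{d,d}=2^{d-1}$ of them, all parity-cross, and the construction keeps exactly $2^{d-2}$ of them (possible since $d\ge2$). So $d_d(G_d^\sharp)=2^{d-2}/2^{d-1}=1/2$. Deleting edges only preserves bipartiteness, so Step 1 is unaffected.

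\emph{Step 4: the consequence.} Given $\alpha<1/2$ and $C$, take $d\ge\max(C,2)$. Then $G_d^\sharp$ is $Q_{2,b}$-free and has $d\ge C$ levels, each of density $1/2>\alpha$.

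There is no real obstacle here. The only point needing care is the parity balance inside a child cell in Step 2, which requires at least one free coordinate. This is exactly why level $d$ must be treated separately and its sibling edges halved by hand.
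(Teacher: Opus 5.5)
Your proof is correct and follows the paper's argument essentially step for step. Freeness comes from bipartiteness of the parity cut; for $\ell<d$, each child cell is exactly parity-balanced (you argue this with the last-coordinate involution, the paper with equal suffix parities); and at level $d$ the $2^{d-1}$ sibling pairs are halved by hand. Your choice $d\ge\max(C,2)$ is only a small tidying of the paper's ``choose $d\ge C$''.
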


\begin{proof}
The graph $G_d^\sharp$ is bipartite, with parts given by the parity of $\sigma$.  Since $Q_{2,b}$ contains a triangle, no bipartite graph contains an ordered copy of $Q_{2,b}$.  Thus $G_d^\sharp$ is $Q_{2,b}$-free.

It remains to compute the level densities.  Fix $1\le\ell<d$ and a level-$\ell$ parent cell.  Its two children consist of words with a common prefix, followed by a $0$ in the left child and a $1$ in the right child.  The two suffixes after coordinate $\ell$ vary independently over $\{0,1\}^{d-\ell}$.  A left leaf and a right leaf have opposite parity exactly when these two suffixes have the same parity.  Since $d-\ell\ge1$, exactly half of the ordered pairs across the two children satisfy this condition.  Averaging over all parent cells gives $d_\ell(G_d^\sharp)=1/2$ for every $\ell<d$.

At level $d$ there are $2^{d-1}$ sibling pairs, and by construction exactly $2^{d-2}$ are retained.  Hence $d_d(G_d^\sharp)=1/2$.  Finally, given $\alpha<1/2$ and $C$, choose $d\ge C$.  Then $G_d^\sharp$ has $d$ levels of density $1/2>\alpha$ and contains no $Q_{2,b}$.
\end{proof}

This proposition calibrates the upper theorem precisely: the binary rich-level forcing result starts at $1/2+\eps$, and Proposition~\ref{prop:binary-sharpness} shows that no threshold below $1/2$ can work, even before leaving binary-level hosts.  More explicitly, if one tried to replace $1/2$ by some $\alpha<1/2$, then with $\eps=(1/2-\alpha)/2$ the graphs $G_d^\sharp$ are $(\alpha+\eps,C)$-rich and still $Q_{2,b}$-free for arbitrarily large $C$.

\section{A local forcing diagnostic}\label{sec:local}

The proof of Theorem~\ref{thm:q2b-rich-forcing} is necessarily many-level.  Even for the first non-trivial member $Q_{2,2}$, a purely local counting argument sees only a much larger threshold.  The following elementary lemma is a useful diagnostic: a one-shot five-block argument only reaches $3/4$.

\begin{lemma}[Five-partite $3/4$ forcing]\label{lem:three-quarter}
Let $A<B<C<D<E$ be five ordered vertex classes.  If
\[
d(A,B),\ d(A,C),\ d(B,C),\ d(C,D),\ d(D,E)>\frac34,
\]
then the five classes contain an ordered copy of $Q_{2,2}$ with one vertex in each class.
\end{lemma}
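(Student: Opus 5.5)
The approach is to find the ordered copy greedily from the right, keeping at each stage a large set of vertices in the current class that can still be extended. Write $Q_{2,2}$ with vertices $1<2<3<4<5$, placed in $A,B,C,D,E$ respectively. Its edges are $12,13,23,34,45$. All densities are normalized bipartite densities $d(X,Y)=e(X,Y)/(|X||Y|)$, and each one exceeds $3/4$.

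\textbf{Step 1 (tail).} Let $D'\subseteq D$ be the set of vertices of $D$ with at least one neighbour in $E$. Every vertex of $D\setminus D'$ contributes nothing to $e(D,E)$, so $d(D,E)\le |D'|/|D|$. Hence $|D'|>\tfrac34|D|$.

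\textbf{Step 2.} Let $C'\subseteq C$ be the set of vertices of $C$ with a neighbour in $D'$. Edges from $C\setminus C'$ to $D$ can only land in $D\setminus D'$. This gives
\[
e(C,D)\le |C'||D|+|C\setminus C'|\,|D\setminus D'|
<|C'||D|+\tfrac14|C\setminus C'|\,|D|.
\]
Writing $c=|C'|/|C|$, we get $\tfrac34<c+\tfrac14(1-c)$, which forces $c>2/3$. Every vertex of $C'$ starts a monotone tail of length $2$ through $D$ and $E$, so it remains to find an ordered triangle $x<y<z$ with $x\in A$, $y\in B$, $z\in C'$.

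\textbf{Step 3 (triangle), the main obstacle.} Passing from $C$ to $C'$ loses density on the pairs $(A,C)$ and $(B,C)$. I would control this loss by averaging over the pairs $(x,y)\in A\times B$ with $xy\in E$. For such a pair, call a vertex $z\in C$ good if $z\in C'$, $xz\in E$ and $yz\in E$. Count the bad events by a union bound in expectation. Choose $(x,y)$ uniformly among all pairs in $A\times B$ and $z$ uniformly in $C$. The probability that $xy$ is a non-edge is less than $1/4$. The probability that $xz$ is a non-edge is less than $1/4$, and likewise for $yz$. The probability that $z\notin C'$ is less than $1/3$.

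The crude sum $1/4+1/4+1/4+1/3$ exceeds $1$, so this plain union bound fails. I therefore expect the real work to be a sharper combination. I would try the following order instead: first fix $z\in C'$, then count the edges of $(A,B)$ inside $N_A(z)\times N_B(z)$. This requires
\[
d(A,B)>1-\frac{|N_A(z)|}{|A|}\cdot\frac{|N_B(z)|}{|B|}
\]
only on average in the relevant form, namely
\[
\mathbb E_{z\in C'}\Bigl[\tfrac{|N_A(z)|}{|A|}+\tfrac{|N_B(z)|}{|B|}\Bigr]>1+\bigl(1-d(A,B)\bigr).
\]
This holds if the degrees of $C'$ into $A$ and $B$ stay above roughly $5/8$ on average. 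To secure that, I would redo Step 2 more carefully, weighting by degrees rather than just counting $|C'|$.

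If the lower bound still falls short, I would refine Step 1 so that $D'$ retains more than $3/4$ of the degree mass. Note that the threshold $3/4$ is presumably chosen so that exactly this bookkeeping closes. I expect the tight inequality at this step to be the crux.
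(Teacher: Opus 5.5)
Your Steps 1 and 2 are correct and match the paper's proof exactly. Your $D'$ and $C'$ are the paper's $D^+$ and $T$, and $c>2/3$ is its inequality (11). You are also right that the plain union bound with $z$ uniform in $C$ fails.

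\textbf{The gap is Step 3.} You reduce the problem to showing that vertices of $C'$ have average normalized degree above $5/8$ into $A$ and into $B$. You never prove this. Instead you propose to re-weight Step 2 or strengthen Step 1, and neither is needed. As written, the proposal stops exactly at its crux.

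\textbf{The missing observation: non-edges can only decrease under restriction.} Every non-adjacent pair in $A\times C'$ is also a non-adjacent pair in $A\times C$. There are fewer than $\frac14|A||C|=\frac1{4c}|A||C'|<\frac38|A||C'|$ of these. Writing $\alpha_z=|N_A(z)|/|A|$ and $\beta_z=|N_B(z)|/|B|$, this gives that the average of $\alpha_z$ over $z\in C'$ exceeds $1-\frac1{4c}>\frac58$. The same argument applies to $\beta_z$.

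Now pick $(x,y,z)$ uniformly from $A\times B\times C'$. Do not pick $z$ uniformly from $C$; that is what cost you the extra $1/3$. The probabilities that $xy$, $xz$, $yz$ are non-edges are less than $\frac14$, $\frac38$, $\frac38$ respectively, and these sum to $1$. So with positive probability all three edges are present. This union bound, equivalently $\alpha_z\beta_z\ge\alpha_z+\beta_z-1$, is also what justifies your averaged criterion, which you stated without proof. The resulting $z\in C'$ then extends through $D'$ and $E$ to a copy of $Q_{2,2}$. This is precisely the paper's argument.

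Your guess about the threshold is right, and no refinement of Steps 1 and 2 is needed. With all densities $>\delta$, Steps 1 and 2 give $c>(2\delta-1)/\delta$. At this value of $c$, the bound $(1-\delta)+2(1-\delta)/c\le1$ holds if and only if $\delta\ge3/4$.
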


\begin{proof}
Let $D^+$ be the set of vertices of $D$ with a neighbour in $E$.  Since $d(D,E)>3/4$, we have $|D^+|>3|D|/4$.  Let $T\subseteq C$ be the set of vertices sending an edge to $D^+$.  If $c\notin T$, then $c$ sends no edge to $D^+$, and hence has $D$-degree less than $|D|/4$.  From $d(C,D)>3/4$ it follows that
\[
\frac{|T|}{|C|}>\frac23. \tag{11}
\]
Write $\theta=|T|/|C|$.  Since $d(A,C)>3/4$, the number of missing $A$-$T$ edges is less than $|A||C|/4$, and therefore
\[
d(A,T)>1-\frac{|C|}{4|T|}=1-\frac1{4\theta}.
\]
The same argument gives $d(B,T)>1-1/(4\theta)$.

Choose $(a,b,t)$ uniformly from $A\times B\times T$.  The probability that at least one of $ab,at,bt$ is missing is less than
\[
\frac14+\frac1{4\theta}+\frac1{4\theta}<1,
\]
by (11).  Thus there are $a\in A$, $b\in B$, and $t\in T$ with $ab,at,bt\in E(G)$.  Since $t\in T$, there exist $d\in D^+$ and $e\in E$ with $td,de\in E(G)$.  These five vertices form $Q_{2,2}$.
\end{proof}

\section{Concluding remarks}

We proved that every ordered triangle-tail $Q_{2,b}$ has relative density exactly $1/2$, and Proposition~\ref{prop:binary-sharpness} shows that this value is sharp already in binary-level hosts.  The proof also explains why finite obstructions are misleading: the local role split can be maintained for a few levels, but the binary smoothing inequalities prevent the necessary role capacities and internal triangle-free/$\Pvec_{b+1}$-free excesses from persisting on unboundedly many levels.  The binary ultrametric cut-domination theorem, weighted binary Mantel smoothing, and weighted binary $\Pvec_{b+1}$ smoothing are independent tools and may apply to other ordered graphs whose forbidden pattern separates a dense core role from a monotone-tail role.

\end{document}